\documentclass[11pt,reqno,letterpaper]{amsart}

\usepackage{booktabs}
\usepackage{color}
\usepackage[colorlinks=true, allcolors=blue,backref=page]{hyperref}
\usepackage{amsmath, amssymb, amsthm}
\usepackage{mathrsfs}
\usepackage{mathtools}
\usepackage[noabbrev,capitalize,nameinlink]{cleveref}
\usepackage[noadjust]{cite}
\usepackage{graphics}
\usepackage{pifont}
\usepackage{tikz}
\usepackage{bbm}
\usepackage{thm-restate}
\usepackage[T1]{fontenc}

\usetikzlibrary{arrows.meta}

\usepackage{environ}
\usepackage{framed}
\usepackage{url}
\usepackage[linesnumbered,ruled,vlined]{algorithm2e}
\usepackage[noend]{algpseudocode}
\usepackage[labelfont=bf]{caption}
\usepackage[framemethod=tikz]{mdframed}
\usepackage{appendix}
\usepackage{graphicx}
\usepackage[textsize=tiny]{todonotes}
\usepackage{tcolorbox}
\usepackage{enumerate}
\usepackage{stmaryrd}

\usepackage[margin=1in]{geometry}

\usepackage[shortlabels]{enumitem}
\crefformat{enumi}{#2#1#3}
\crefrangeformat{enumi}{#3#1#4 to~#5#2#6}
\crefmultiformat{enumi}{#2#1#3}
{ and~#2#1#3}{, #2#1#3}{ and~#2#1#3}

\DeclareSymbolFont{symbolsC}{U}{pxsyc}{m}{n}
\SetSymbolFont{symbolsC}{bold}{U}{pxsyc}{bx}{n}
\DeclareFontSubstitution{U}{pxsyc}{m}{n}
\DeclareMathSymbol{\medcircle}{\mathbin}{symbolsC}{7}

\crefname{equation}{}{} %remove ``Equation''
\AtBeginEnvironment{appendices}{\crefalias{section}{appendix}} %appendices

\usepackage[color,final]{showkeys} %add in 'final' into parameter to remove showkeys

\colorlet{refkey}{orange!20}
\colorlet{labelkey}{blue!30}

\crefname{algocf}{Algorithm}{Algorithms}

\renewcommand{\P}{\mathbb{P}}

\newcommand{\R}{\mathbb{R}}

\numberwithin{equation}{section}
\newtheorem{theorem}{Theorem}[section]
\newtheorem{proposition}[theorem]{Proposition}
\newtheorem{lemma}[theorem]{Lemma}

\crefname{claim}{Claim}{Claims}

\newtheorem{conjecture}[theorem]{Conjecture}
\newtheorem*{question*}{Question}

\theoremstyle{definition}
\newtheorem{definition}[theorem]{Definition}

\newtheorem*{definition*}{Definition}
\newtheorem{example}[theorem]{Example}

\newtheorem{remark}[theorem]{Remark}



\newcommand{\bs}{\boldsymbol}
\newcommand{\mb}{\mathbb}
\newcommand{\mbf}{\mathbf}

\newcommand{\mc}{\mathcal}

\newcommand{\mr}{\mathrm}

\newcommand{\on}{\operatorname}

\newcommand{\KL}{\operatorname{KL}}	
\newcommand{\E}{\mathbb{E}}	

\newcommand{\eps}{\varepsilon}

\let\originalleft\left
\let\originalright\right
\renewcommand{\left}{\mathopen{}\mathclose\bgroup\originalleft}
\renewcommand{\right}{\aftergroup\egroup\originalright}

\allowdisplaybreaks

\newcommand{\ignore}[1]{}

\title{Optimal girth-dependent bounds for the Bethe approximation of the permanent}

\author{Dingding Dong}
\address{Department of Mathematics, California Institute of Technology, Pasadena, CA 91125, USA}
\email{ddong124@caltech.edu}

\author{Vishesh Jain}
\address{Department of Mathematics, Statistics, and Computer Science,
University of Illinois Chicago, Chicago, IL 60607, USA}
\email{visheshj@uic.edu}

\begin{document}
\begin{abstract}
For an $n\times n$ nonnegative matrix $A$, the Bethe permanent, which is computable in deterministic polynomial time, satisfies the tight  universal comparison
\[\on{Bethe}(A) \leq \on{per}(A) \leq 2^{n/2}\on{Bethe}(A).\]
The lower bound, due to Gurvits, is attained on forests. The upper bound, due to Anari and Rezaei, is attained by the adjacency matrix of a disjoint union of $4$-cycles. 

Confirming a conjecture of Anari, we provide an optimal girth-dependent refinement of the above comparison. More precisely, we show that if the bipartite support graph of $A$ has girth at least an even integer $g \geq 4$, then
\[\on{Bethe}(A) \leq \on{per}(A) \leq 2^{2n/g}\on{Bethe}(A).\]
The upper bound is attained by the adjacency matrix of a disjoint union of $g$-cycles.
\end{abstract}

\maketitle

\section{Introduction}\label{sec:introduction}

For a positive integer $n$, write $[n]:=\{1,\ldots,n\}$ and let
$\on{Sym}(n)$ denote the set of permutations of $[n]$. For a
matrix $A \in \mb{R}^{n\times n}$, the permanent of $A$ is defined by
\[
  \on{per}(A)
  :=\sum_{\sigma\in\on{Sym}(n)}
       \prod_{i=1}^n A_{i,\sigma(i)}.
\]
Although this formula differs from the determinant only in the
absence of signs, the two quantities have fundamentally different
computational behavior. The determinant can be computed in polynomial
time, whereas Valiant~\cite{Valiant} proved that computing the permanent is
$\#\mathrm{P}$-hard even for $0$--$1$ matrices.

Let $G_A=(L\sqcup R,E)$ be the
bipartite graph in which $L$ and $R$ are disjoint copies of $[n]$ and $ ij \in E$ if and only if $A_{ij} \neq 0$. We call $G_A$ the support graph of $A$, assign the edge $ij$
weight $A_{ij}$, and write $A_e := A_{ij}$ for $e = ij$. Then
\[
  \on{per}(A)
  =\sum_{M\in\mc{M}(G_A)}\prod_{e\in M}A_e,
\]
where $\mc{M}(G_A)$ denotes the set of perfect matchings of $G_A$. When $A$ is nonnegative, the permanent is therefore the partition function of the
weighted perfect-matching model. In particular, when $A$ is a
$0$--$1$ matrix, every perfect matching has weight one, and
$\on{per}(A)$ counts the perfect matchings of $G_A$.

We henceforth restrict to nonnegative matrices and consider the
problem of approximating the permanent efficiently. Jerrum, Sinclair,
and Vigoda~\cite{JSV} settled the randomized version by providing a fully polynomial
randomized approximation scheme. Given $\eps,\delta>0$,
their algorithm returns a $(1+\eps)$-approximation to $\on{per}(A)$
with probability at least $1-\delta$, in time polynomial in the input
size, $1/\eps$, and $\log(1/\delta)$.

\subsection{Deterministic approximation of the permanent}

In contrast, no deterministic analogue of the Jerrum--Sinclair--Vigoda
result is known. In fact, every known general deterministic polynomial-time guarantee loses a factor exponential in $n$. Throughout, we measure multiplicative guarantees by the width of a certified interval,~i.e.,~a factor $C \geq 1$ guarantee consists of polynomial-time computable
quantities $L(A)$ and $U(A)$ satisfying
\[
  L(A)\leq\on{per}(A)\leq U(A)
  \qquad\text{and}\qquad
  U(A)\leq C L(A).
\]

The first nontrivial general deterministic guarantee was obtained by
Linial, Samorodnitsky, and Wigderson~\cite{LSW}. Combining matrix
scaling with the van der Waerden inequality, they obtained a factor
$e^n$ approximation. A sequence of refinements by
Gurvits and Samorodnitsky~\cite{Gurvits2006,Samorodnitsky,GS}
eventually lowered the factor to approximately $1.9022^n$. Anari and
Rezaei~\cite{AR} subsequently obtained the factor $2^{n/2}$ by providing a sharp universal comparison between the Bethe permanent and the permanent (see \cref{sec:bethe-perm}). Very recently, Anari~\cite{AnariBeyondBethe} and Narang and Perkins~\cite{NarangPerkins} improved the factor to $c^{n}$ for some absolute constant $c < \sqrt{2}$ by supplementing the Bethe approximation with additional efficiently computable corrections.

As observed by Anari and Rezaei, it is not a coincidence that the gap between randomized and deterministic approximations is $(1+\eps)$ vs. exponential. Indeed, by a simple tensorization trick, a deterministic polynomial-time algorithm which approximates the permanent of
every $n\times n$ nonnegative matrix within a factor $2^{n^c}$, for fixed $c < 1$, can be boosted to obtain a deterministic fully polynomial-time approximation scheme. 

\subsection{The Bethe permanent}
\label{sec:bethe-perm}
Anari and Rezaei obtained their $2^{n/2}$ bound by analyzing the
Bethe permanent, a heuristic approximation to the permanent originating in
statistical physics. For a bipartite graph $G$, let
\[
  \on{PM}(G)
  :=\left\{
      \bs{p}\in\R_{\geq0}^{E(G)}:
      \sum_{e\ni v}p_e=1
      \ \text{for every }v\in V(G)
    \right\}
\]
denote the fractional perfect matching polytope. If
$\on{PM}(G_A)=\emptyset$, we set $\on{Bethe}(A)=0$. If
$\on{PM}(G_A)\ne\emptyset$, define
\begin{equation}
\label{eq:bethe-product}
  \on{Bethe}(A)
  :=\max_{\bs{p}\in\on{PM}(G_A)}
      \prod_{e\in E(G_A)}
      \left(\frac{A_e}{p_e}\right)^{p_e}
      (1-p_e)^{1-p_e},
\end{equation}
where the boundary terms are defined by continuity. It will be useful to decompose the variational expression into an energy term and an entropy term. For a bipartite graph $G$ and
$\bs{p}\in\on{PM}(G)$, define the Bethe entropy by
\begin{equation}
  H_{\mr B}(\bs{p})
  :=
  \sum_{e\in E(G)}
  \bigl(
    -p_e\log p_e+(1-p_e)\log(1-p_e)
  \bigr).
\label{eq:bethe-entropy}
\end{equation}
Taking logarithms in \cref{eq:bethe-product}, we obtain
\begin{equation}
  \log\on{Bethe}(A)
  =
  \max_{\bs{p}\in\on{PM}(G_A)}
  \left\{
    \sum_{e\in E(G_A)}p_e\log A_e
    +H_{\mr B}(\bs{p})
  \right\}
\label{eq:logbethe}
\end{equation}
whenever $\on{PM}(G_A)\ne\emptyset$. Vontobel~\cite{Vontobel} proved that the Bethe entropy is concave on
$\on{PM}(G_A)$. Since the energy term is linear, the objective in \cref{eq:logbethe}
is concave, and maximizing it over the explicitly described polytope
$\on{PM}(G_A)$ is a convex optimization problem. In particular,
$\log\on{Bethe}(A)$ can be approximated to any prescribed additive
accuracy in polynomial time, or equivalently,
$\on{Bethe}(A)$ can be approximated to prescribed multiplicative
accuracy~\cite{Vontobel,GS} in polynomial time. 

Gurvits~\cite{Gurvits} proved that
\begin{equation}
\label{eq:gurvits-bound}
  \on{Bethe}(A)\leq\on{per}(A)
\end{equation}
for every nonnegative matrix $A$ and Gurvits and
Samorodnitsky~\cite{GS} obtained the reverse inequality with a multiplicative loss of $2^n$. Anari and Rezaei~\cite{AR} proved the sharp
universal bound
\begin{equation}
\label{eq:AR-bound}
  \on{per}(A)
  \leq 2^{n/2}\on{Bethe}(A).
\end{equation}

The lower and upper bounds are attained at opposite extremes. On tree-structured graphical models, the Bethe approximation is
exact~\cite{YedidiaFreemanWeiss}. For perfect matchings, this follows directly. A forest has at most one
perfect matching, since the symmetric difference of two perfect
matchings would contain a cycle. Moreover, if a perfect matching
exists, repeated deletion of leaves shows that its indicator vector
is the unique point of $\on{PM}(G_A)$. Hence
$
  \on{Bethe}(A)=\on{per}(A)
$
whenever $G_A$ is a forest. By contrast, equality in \cref{eq:AR-bound} is obtained by the bipartite adjacency matrix of a disjoint union of $4$-cycles, the shortest possible cycles in a bipartite graph. 

\subsection{Our contribution}

The contrast between forests and $4$-cycles suggests that the error
of the Bethe approximation should be controlled by the girth of the
support graph. Recall that the girth of a graph is the length of its
shortest cycle (which is necessarily even for a bipartite graph), with
forests regarded as having infinite girth. Motivated by this, Nima Anari
proposed the following conjecture in private communication.

\begin{conjecture}[Anari]
\label{conj:Anari}
Let $g\geq4$ be even, and let
$A\in\R_{\geq0}^{n\times n}$ be a nonnegative matrix whose support
graph has girth at least $g$. Then
\[
  \on{per}(A)
  \leq 2^{2n/g}\on{Bethe}(A).
\]
\end{conjecture}

\begin{example}
\label{ex:cycle-extremizers}
When $g\mid 2n$, the factor $2^{2n/g}$ in
\cref{conj:Anari} is attained by the bipartite adjacency matrix of a
disjoint union of $g$-cycles. Indeed, let $C_g$ be the
$(g/2)\times(g/2)$ $0$--$1$ matrix whose support graph is a single
cycle of length $g$. The cycle has exactly two perfect matchings, and
hence
$
  \on{per}(C_g)=2.
$
On the other hand, every fractional perfect matching of the cycle has
alternating edge weights
\[
  x,1-x,x,1-x,\ldots
\]
for some $x\in[0,1]$. For $x\in(0,1)$, the two factors in
\cref{eq:bethe-product} corresponding to a consecutive pair of edge
weights multiply to
\[
  \left(\frac1x\right)^x(1-x)^{1-x}
  \left(\frac1{1-x}\right)^{1-x}x^x
  =1,
\]
and the endpoint cases follow by continuity. Thus
$
  \on{Bethe}(C_g)=1.
$
Taking $A$ to be the block diagonal matrix consisting of $2n/g$
copies of $C_g$, the multiplicativity of both the permanent and the
Bethe permanent gives
\[
  \on{per}(A)
  =2^{2n/g}\on{Bethe}(A).
\]
\end{example}

Our main result confirms \cref{conj:Anari}.

\begin{theorem}
\label{thm:main}
Let $g\geq4$ be even, and let
$A\in\R_{\geq0}^{n\times n}$ be a nonnegative matrix whose support
graph has girth at least $g$. Then
\[
  \on{Bethe}(A)
  \leq\on{per}(A)
  \leq2^{2n/g}\on{Bethe}(A).
\]
Moreover, whenever $g\mid2n$, the upper bound is attained by the
$0$--$1$ matrices whose support graphs are disjoint unions of cycles
of length $g$.
\end{theorem}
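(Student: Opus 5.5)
The lower bound is Gurvits' inequality \cref{eq:gurvits-bound}, and the extremal examples are those of \cref{ex:cycle-extremizers}. So the real content is the upper bound $\on{per}(A)\le 2^{2n/g}\on{Bethe}(A)$. I would follow the general strategy of Anari and Rezaei's proof of \cref{eq:AR-bound}, but keep track of the girth throughout.

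\textbf{Reduction to a per-matching inequality.} Since $\on{Bethe}(A)$ is a maximum over $\on{PM}(G_A)$, it suffices to exhibit a single good point $\bs p$. Take $\bs p$ to be the edge-marginals of the Gibbs measure $\mu(M)\propto\prod_{e\in M}A_e$ on perfect matchings. Then $\sum_e p_e\log A_e=\E_\mu\log w(M)$ and $\log\on{per}(A)=\E_\mu\log w(M)+H(\mu)$. Hence it is enough to prove the entropy comparison
\[
H(\mu)\le H_{\mr B}(\bs p)+\tfrac{2n}{g}\log 2
\]
for every probability measure $\mu$ on perfect matchings of a bipartite graph of girth at least $g$, where $\bs p$ is the marginal vector of $\mu$.

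\textbf{The local comparison.} Write $H(\mu)$ by the chain rule, revealing the partner of each left vertex in turn. Following Anari--Rezaei and Gurvits--Samorodnitsky, I would express $H(\mu)-H_{\mr B}(\bs p)$ as a sum over left vertices $i$ of terms controlled by the marginal distribution $(p_{ij})_j$. After averaging over random orderings, the partner of $i$ has already been determined by earlier choices except on the "uncertain'' part, which forms alternating cycles. The point where girth enters is this. Condition on a matching $M$ and a second matching $M'$ drawn independently from $\mu$. The symmetric difference $M\triangle M'$ is a disjoint union of alternating cycles, each of length at least $g$, so it has at most $2n/g$ cycles. Each cycle contributes at most one bit of genuine freedom.

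So I would aim for the bound
\[
H(\mu)-H_{\mr B}(\bs p)\le \E_{M,M'\sim\mu}\bigl[\#\text{cycles of }M\triangle M'\bigr]\log 2,
\]
or an analogous bound with a cycle-count weight. For $g=4$ this bound should reduce to the Anari--Rezaei estimate, since every cycle has length at least $4$ and so there are at most $n/2$ cycles. Combined with the girth bound of at most $2n/g$ cycles, it gives the theorem.

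\textbf{Main obstacle.} The hard step is the displayed entropy inequality itself. The Bethe entropy contains the positive terms $(1-p_e)\log(1-p_e)$, and these must be matched against the cycle structure. My first attempt would be a "lift''/double-cover argument. The permanent of $2$-lifts relates to $\on{Bethe}$ through Vontobel's graph-cover characterization, and a cycle of $M\triangle M'$ corresponds to a $2$-cover component that either splits or stays connected. This gives a counting bound $\on{per}(A)^2\le \sum_{\text{2-lifts}}\cdots$, with the factor $2^{\#\text{cycles}}$. Iterating over $k$-lifts as $k\to\infty$ would then convert the per-lift factor into $2^{2n/g}$ on $\on{Bethe}$. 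If the lift approach fails, I would instead try to prove the local inequality vertex-by-vertex: charge each uncertain step to a cycle of length at least $g$, which passes through $g/2$ left vertices, so each left vertex carries at most $(2/g)\log 2$.
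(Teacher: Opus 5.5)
\textbf{There is a genuine gap: the entropy inequality at the core of the upper bound is unproved, and the form you propose for it is false.}

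Your reduction is correct and matches the paper's first step. With $\bs p$ the marginals of the Gibbs measure, the energy terms cancel, so it suffices to prove $H(\mu)\le H_{\mr B}(\bs p)+\frac{2n}{g}\log 2$. The lower bound and the sharpness are also handled correctly.

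\textbf{Your inequality fails on the extremal example.} Take $\mu$ uniform on the two perfect matchings of a single $g$-cycle. Then $p_e=\frac12$ on every edge, so $H_{\mr B}(\bs p)=0$ and $H(\mu)=\log 2$. Meanwhile $M\triangle M'$ is empty with probability $\frac12$ and is the whole cycle otherwise, so $\E[\#\text{cycles}]\log 2=\frac12\log 2<\log 2$. This already fails at $g=4$, so your bound does not reduce to Anari--Rezaei there; it is simply false.

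\textbf{Reweighting does not rescue it.} Using $2\,\E[\#\text{cycles}]\log 2$ fixes this example, but to reach $2^{2n/g}$ you would then need $\E[\#\text{cycles}]\le n/g$, which is false. For $g=4$, let $\mu$ be uniform on the $n=2^k$ matchings $x\mapsto x+a$ ($a\in\mathbb{F}_2^k$) of $K_{n,n}$. Any two distinct ones differ in $n/2$ four-cycles, so $\E[\#\text{cycles}]=(1-\frac1n)\frac n2$. Hence no bound of the form ``$c$ bits per alternating cycle, then count cycles'' yields the sharp constant.

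\textbf{The lift route has the same defect.} For a uniformly random $2$-lift $A^{\uparrow2}$ (both lifts of $e$ weighted $A_e$) one has exactly
$\E\,\on{per}(A^{\uparrow2})=\sum_{M,M'}w(M)w(M')\,2^{-c(M\triangle M')}$, with $w(M):=\prod_{e\in M}A_e$. This gives $\on{per}(A)^2\le 2^{2n/g}\,\E\,\on{per}(A^{\uparrow2})$. But iterating $k$ times and taking $2^k$-th roots leaves a factor $2^{kn/g}$, which diverges. Alternatively, an inequality $\on{per}(A)^k\le 2^{2nk/g}e^{o(k)}\,\E\,\on{per}(A^{\uparrow k})$ for degree-$k$ covers is, via Vontobel's graph-cover formula for $\on{Bethe}$, equivalent to the theorem rather than a step toward it.

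\textbf{The vertex-charging fallback is only a heuristic.} A chain-rule step at $u$ can carry entropy up to $\log d_u$, not $\log 2$. The proposal also never says how the positive $(1-p_e)\log(1-p_e)$ terms in $H_{\mr B}$ are paid for.

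\textbf{The missing idea} is to localize the comparison at vertices rather than at alternating cycles. The paper proves the stronger bound $H(\mu)-H_{\mr B}(\bs p)\le\frac2g\sum_v F(\bs p_v)$, where $F$ is the Anari--Rezaei functional with $F\le\frac{\log 2}{2}$.
\begin{itemize}
\item It argues by induction on $n$, finding $u\in L$ for which conditioning on $X_u$ lowers $\Phi=H_{\mr B}+\frac2g\sum_v F$ by at least $H(X_u)$.
\item Girth enters through the information $I(X_u;\mathbf 1_{\{e\in M\}})$, split according to the value of the indicator. Convexity of relative entropy lets these pieces be routed as a flow.
\item Decomposing that flow gives nonbacktracking walks, and every return to $u$ is a closed nonbacktracking walk of length at least $g$. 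This controls a truncated potential $\Phi_0$ up to a deficit $\frac2g S(\bs p_u)$.
\item That deficit is covered only on average over $u\in L$, using concavity of $\Gamma=H-2H_{\mr c}+2F$ and the fact that $X$ is a bijection.
\end{itemize}
Neither this machinery nor a working substitute appears in your proposal, so the upper bound remains unproved.
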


When $g=4$, \cref{thm:main} recovers the sharp universal bound of
Anari and Rezaei. At the other extreme, the Bethe approximation is
exact on forests, as discussed above. More generally, along any
sequence of matrices whose support graphs have girth tending to infinity,
the Bethe permanent approximates the permanent within a
subexponential factor.

\subsection{Relation to previous work} As discussed above, the Bethe permanent is exact when the support
graph is a forest. This is consistent with the more general
exactness of the Bethe approximation on tree-structured graphical
models~\cite{YedidiaFreemanWeiss}. On graphs with cycles, loop
calculus gives exact formulas for the discrepancy between the true
partition function and its Bethe approximation. Chertkov and
Chernyak~\cite{ChertkovChernyak} expressed the partition function as the Bethe contribution
plus correction terms indexed by generalized loops,
and Watanabe and Chertkov~\cite{WatanabeChertkov} derived a version of this expansion for the
permanent. These formulas describe the
correction in terms of the individual loops and their weights. By
contrast, \cref{thm:main} gives a sharp worst-case bound depending
only on the girth of the support graph.

Several works show that the Bethe prediction becomes asymptotically exact for matching and related models on graph sequences that become locally tree-like; see, for example,
\cite{ZdeborovaMezard,BayatiNair,DemboMontanariSun,Meszaros}.
In the setting of perfect matchings, Ab\'ert, Csikv\'ari, Frenkel,
and Kun~\cite{ACFK} showed that for $d$-regular bipartite graphs converging locally to the
$d$-regular tree,  the normalized logarithm of the number of perfect
matchings converges to the Bethe prediction; see
also Lelarge~\cite{Lelarge} for related results. In the other direction, Csikv\'ari~\cite{CsikvariVertexTransitive} showed that for $d$-regular
vertex-transitive bipartite graphs, the presence of a cycle of
bounded length gives a strict exponential improvement over this tree
rate.
The former results concern locally tree-like limits
or finite-activity models, while Csikv\'ari's result requires
regularity and vertex transitivity. By contrast, \cref{thm:main}
gives a sharp non-asymptotic bound for weighted perfect matchings
without any additional assumptions. 

\subsection{Overview of the proof} Suppose that $\on{per}(A)>0$. Let $\mu_A$ be the Gibbs distribution
induced by $A$ on the perfect matchings of $G_A$, given by
\[
\mu_A(M):=\frac{\prod_{e\in M}A_e}{\on{per}(A)},
\]
and let $\bs{p}$ be its vector of edge marginals. Then
\begin{align*}
\log\on{per}(A)
&=\sum_{e\in E(G_A)}p_e\log A_e+H(\mu_A),\\
\log\on{Bethe}(A)
&\geq\sum_{e\in E(G_A)}p_e\log A_e+H_{\mr B}(\bs{p}).
\end{align*}
Thus, the common energy term cancels, and the problem reduces to
comparing $H(\mu_A)$ with $H_{\mr B}(\bs{p})$.

Anari and Rezaei control this discrepancy by a vertex-wise functional
$F$. Their argument can be used to show (see~\cite[Section~4, especially Equation~(6)]{AR}) that
\[
H(\mu_A)-H_{\mr B}(\bs{p})
\leq
\frac12\sum_{v\in V(G_A)}F(\bs{p}_v).
\]
Here $F$, defined in \cref{def:s-i-pi}, depends only on the local
marginal vector $\bs{p}_v=(p_e)_{e\ni v}$. Their sharp bound
$F(\bs{q})\leq(\log2)/2$, recorded in \cref{prop:AR}, then yields
their universal $2^{n/2}$ approximation.

Our main technical result, \cref{thm:entropy}, replaces $1/2$ by
$2/g$. Namely, every distribution $\mu$ on the perfect matchings of a
graph $G$ of girth at least $g$, with edge-marginal vector $\bs{p}$,
satisfies
\begin{equation}
\label{eq:intro-entropy-bound}
H(\mu)-H_{\mr B}(\bs{p})
\leq
\frac2g\sum_{v\in V(G)}F(\bs{p}_v).
\end{equation}
Taking $\mu=\mu_A$ and applying \cref{prop:AR} at the $2n$ vertices of
$G_A$ gives \cref{thm:main}.

\medskip

\paragraph{\bf Reduction to a good vertex} Write
$\Phi_G(\bs{p})
:=H_{\mr B}(\bs{p})+\frac2g\sum_vF(\bs{p}_v)$. Let $M\sim\mu$, and for $u\in L$ let $X_u$ be the vertex matched to $u$. 
We prove $H(\mu)\leq\Phi_G(\bs{p})$ by induction. Conditioning
on $X_u=w$ fixes the edge $uw$, and deleting $u$ and $w$ gives a
smaller perfect-matching instance whose girth is still at least $g$.
The chain rule for entropy and the induction hypothesis therefore
reduce the proof to finding $u\in L$ such that
\[
\Phi_G(\bs{p}) - \E_{X_u}\Phi_G(\bs{p}^{X_u})
\geq
H(X_u),
\]
where 
\[
  \bs{p}^{X_u}
  :=\bigl(\P(e\in M\mid X_u)\bigr)_{e\in E(G)}
\]
is the conditional marginal vector. We call such
a vertex {good}; see \cref{def:good-vertex}. 

\medskip

\paragraph{\bf Where girth enters}
Fix \(u\in L\), and for each edge \(e\in E(G)\) let $E_e := \mathbf 1_{\{e\in M\}}.$ 
We decompose the mutual information \(I(X_u;E_e)\) according to whether
\(E_e=1\) or \(E_e=0\). Summing the $E_e=1$ terms over $e\ni v$ gives
\[
H(\bs p_v)-\E_{X_u}H(\bs p_v^{X_u}),
\]
while summing the $E_e=0$ terms gives
\[
H_{\mr c}(\bs p_v)-\E_{X_u}H_{\mr c}(\bs p_v^{X_u}),
\]
where $H_{\mr c}$ is the complementary entropy functional defined in
\cref{def:entropy}.

By the perfect matching constraint, if an edge $e$ incident to $v$ is absent, then
exactly one of the other edges $f\neq e$ incident to $v$ is present. Convexity
of relative entropy therefore bounds the contribution from $E_e=0$ by
the contributions from $(E_f = 1)_{f\neq e}$. In
\cref{sec:constructing-flow}, we use this comparison to construct an information flow on an auxiliary network. After decomposing the flow into paths and cycles, each
component projects to a nonbacktracking walk in \(G\). 

By construction, the steps leaving $u$ account for the $E_e=1$
contributions at $u$, whose total weight is $H(X_u)$. Every departure
after the first requires the walk to return to $u$ before it can leave
again. The resulting excursion is a closed nonbacktracking walk, and
hence has length at least $g$. A weighted count of these excursions,
carried out in \cref{prop:girth-numerical}, gives the following estimate. Define
$$
\Phi_0(\bs p)
:=
\frac1g\sum_{v\in V}H_{\mr c}(\bs p_v)
+\frac{g-2}{2g}\sum_{v\in V}S(\bs p_v),
$$
where $S:=H-H_{\mr c}$ is the local Bethe entropy functional from
\cref{def:entropy}. Then \cref{prop:girth-estimate} states that
\[
\Phi_0(\bs p)-\E_{X_u}\Phi_0(\bs p^{X_u})-H(X_u)
\geq
-\frac2gS(\bs p_u).
\]
Thus the girth argument controls $\Phi_0$, rather than the full
potential $\Phi$, and leaves a deficit of $2S(\bs p_u)/g$. It remains
to control $\Phi-\Phi_0$.

\medskip

\paragraph{\bf Averaging}
A direct calculation gives $$ \Phi(\bs p)-\Phi_0(\bs p) = \frac1g\sum_{v\in V}\Gamma(\bs p_v), \qquad \Gamma(\bs q):= H(\bs q)-2H_{\mr c}(\bs q)+2F(\bs q). $$ 
For a fixed vertex $u$, the girth estimate would therefore imply that $u$ is good if $$ \sum_{v\in V} \left( \Gamma(\bs p_v) -\E_{X_u}\Gamma(\bs p_v^{X_u}) \right) \geq 2S(\bs p_u). $$
This pointwise inequality is false in general. Instead, we prove the averaged version
$$
\sum_{u\in L}\sum_{v\in V}
\left(
\Gamma(\bs p_v)-\E_{X_u}\Gamma(\bs p_v^{X_u})
\right)
\geq
2\sum_{u\in L}S(\bs p_u).
$$
Together with the girth estimate, this guarantees that
some vertex is good.

Averaging allows us to interchange the order of summation:
$$
\sum_{u\in L}\sum_{v\in V}
\left(
\Gamma(\bs p_v)-\E_{X_u}\Gamma(\bs p_v^{X_u})
\right)
=
\sum_{v\in V}\sum_{u\in L}
\left(
\Gamma(\bs p_v)-\E_{X_u}\Gamma(\bs p_v^{X_u})
\right).
$$
Thus it suffices to control, for each $v\in V$, the total change in $\Gamma(\bs p_v)$ obtained by conditioning on the coordinates $X_u$, $u\in L$. Since $X$ is a bijection, if $v\in L$, conditioning on $X_v$ determines the matched neighbor of $v$, while conditioning on $X_u$ for $u\neq v$ rules out one possible matched neighbor. The key recursive identity for $\Gamma$ (\cref{lem:delete-one-identity}) then shows that
$$
\sum_{u\in L}
\left(
\Gamma(\bs p_v)-\E_{X_u}\Gamma(\bs p_v^{X_u})
\right)
\geq S(\bs p_v).
$$
The corresponding argument for $X^{-1}$ treats $v \in R$. Summing over $v \in V$ gives
$$
\sum_{u\in L}\sum_{v\in V}
\left(
\Gamma(\bs p_v)-\E_{X_u}\Gamma(\bs p_v^{X_u})
\right)
\geq
\sum_{v\in V}S(\bs p_v)
=
2\sum_{u\in L}S(\bs p_u).
$$
Thus the averaged contribution from $\Phi-\Phi_0$ exactly compensates
for the total deficit left by the girth estimate. 

\subsection{Acknowledgements} The authors thank Nima Anari, Clayton Mizgerd, and Huy Tuan Pham for helpful discussions. V.J.~is supported by NSF grant DMS-2237646. This work was initiated when D.D.~visited V.J.~at the University of Illinois Chicago. 

\medskip

\paragraph{\bf Statement on AI use} The authors had developed a strategy for proving \cref{thm:main} with multiplicative factor $\exp(n/f(g))$, where $f(g) \to \infty$ as $g \to \infty$. ChatGPT 5.6 Sol Pro was able to develop a version of this strategy into a proof with $f(g) = \Theta(g/\log g)$. Subsequent interactions with ChatGPT 5.6 Sol Ultra, aimed at understanding the source of this logarithmic loss and the relationship with the work of Anari--Rezaei, led to the development of the present proof. The authors used Codex for assistance with preparing the manuscript. The mathematical content, the final text, and any errors are the responsibility of the authors.

\section{Preliminaries}\label{sec:preliminaries}

\subsection{Bethe entropy and its local decomposition}

All logarithms are natural and we use the convention $0\log 0 = 0$ which is imposed by continuity. For $d\geq1$, let
\[
   \Delta_d:=\left\{\bs{q}\in\R_{\geq0}^d:
                     \sum_{i=1}^d q_i=1\right\}
\]
denote the probability simplex.

\begin{definition}\label{def:entropy}
For $\bs{q}=(q_1,\ldots,q_d)\in\Delta_d$, let
\[
  H(\bs{q}):=-\sum_{i=1}^d q_i\log q_i
\]
denote the usual Shannon entropy, and define the complementary entropy functional
\[
  H_{\mr c}(\bs{q}):=-\sum_{i=1}^d(1-q_i)\log(1-q_i).
\]
We call the difference
\[
  S(\bs{q}):=H(\bs{q})-H_{\mr c}(\bs{q})
\]
the local Bethe entropy functional.
If $\mu$ is a
probability distribution on a finite set, we write $H(\mu)$ for the
entropy of its probability vector; for a finite random variable $U$, we write $\mc{L}(U)$ for its law and set $H(U):=H(\mc{L}(U))$.
\end{definition}

\begin{remark}
    The terminology complementary entropy functional is motivated by the following. Writing
\[
  h(t):=-t\log t-(1-t)\log(1-t)
\]
for the binary entropy function, we have
$
  H(\bs{q})+H_{\mr c}(\bs{q})
  =\sum_{i=1}^d h(q_i).
$
We emphasize that $H_{\mr c}$ is not generally the entropy of a probability
distribution, since
$
  \sum_{i=1}^d(1-q_i)=d-1.
$
\end{remark}

The functional $S$ is the local building block of the Bethe
entropy. Let $G=(V,E)$ be a bipartite graph and let
$\bs{q}\in\on{PM}(G)$. For each $v\in V$, write
$
  \bs{q}_v:=(q_e)_{e\ni v}\in\Delta_{d_v},
$
where $d_v$ is the degree of $v$. In this notation, the Bethe entropy
is
\begin{align}\label{eq:bethe-entropy-local}
  H_{\mr B}(\bs{q})
  &:=\sum_{e\in E}
      \bigl(-q_e\log q_e+(1-q_e)\log(1-q_e)\bigr) =\frac12\sum_{v\in V}S(\bs{q}_v).
\end{align}
The edgewise expression is due to Vontobel
\cite[Corollary~15]{Vontobel}. The functional $S$ appears in
\cite[Definition~19]{Vontobel}, and the corresponding local
decomposition of the Bethe entropy is given in
\cite[Lemma~21]{Vontobel}. The identity in
\cref{eq:bethe-entropy-local} also explains our terminology ``local Bethe entropy functional'', as each
$S(\bs{q}_v)$ is the contribution at the vertex $v$ to the global Bethe entropy. 

We will repeatedly use the following two properties of $S$.

\begin{lemma}\label{lem:b-properties}
For every $d\geq1$, the functional $S$ is nonnegative and concave on
$\Delta_d$.
\end{lemma}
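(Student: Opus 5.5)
We prove nonnegativity and concavity of
\[
S(\bs q)=\sum_{i=1}^d\bigl(-q_i\log q_i+(1-q_i)\log(1-q_i)\bigr)
\]
separately, working throughout with this coordinate expression.

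\emph{Nonnegativity.} Each term has the form $\phi(q_i)$ with $\phi(t):=-t\log t+(1-t)\log(1-t)$. This $\phi$ is not termwise nonnegative, so we use the constraint $\sum_i q_i=1$. Set $x_i:=q_i$. Then $1-q_i=\sum_{j\neq i}q_j\ge q_j$ for every $j\ne i$, and the two terms to compare are
\[
(1-q_i)\log(1-q_i)=\sum_{j\ne i}q_j\log(1-q_i)
\quad\text{and}\quad
\sum_i q_i\log q_i=\sum_j q_j\log q_j .
\]
Reorganize the sum so that $S=\sum_i\sum_{j\ne i}q_j\log(1-q_i)-\sum_j q_j\log q_j$. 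Rewriting, $S=\sum_j q_j\bigl[\sum_{i\ne j}\log(1-q_i)-\log q_j\bigr]$.

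We need $\prod_{i\ne j}(1-q_i)\ge q_j$, at least after averaging against $q_j$. Pointwise this holds: fix $j$ and use $\prod_{i\ne j}(1-q_i)\ge 1-\sum_{i\ne j}q_i=q_j$, which is the Weierstrass product inequality for $q_i\in[0,1]$. Hence each bracket is $\ge0$, and $S\ge0$. The boundary cases, where $q_j=0$ or some $q_i=1$, follow by continuity or directly from the $0\log0$ convention.

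\emph{Concavity.} The function is separable, $S=\sum_i\phi(q_i)$, but $\phi''(t)=-1/t+1/(1-t)$ is positive for $t>1/2$. So termwise concavity fails, and we again must use the simplex. At most one coordinate can exceed $1/2$. The plan is to compute the Hessian quadratic form restricted to the tangent space $\{\bs h:\sum h_i=0\}$:
\[
Q(\bs h)=\sum_i\Bigl(\frac{1}{1-q_i}-\frac1{q_i}\Bigr)h_i^2 ,
\]
and to show $Q\le0$ whenever $\sum h_i=0$, at interior points; concavity on the closed simplex then follows by continuity.

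Let $k$ be the index with the largest $q_k$. For $i\ne k$ the coefficient is $\le0$, so only the case $q_k>1/2$ needs work. Use $h_k=-\sum_{i\ne k}h_i$ and Cauchy--Schwarz with weights $q_i$:
\[
h_k^2\le\Bigl(\sum_{i\ne k}q_i\Bigr)\Bigl(\sum_{i\ne k}\frac{h_i^2}{q_i}\Bigr)=(1-q_k)\sum_{i\ne k}\frac{h_i^2}{q_i}.
\]
This gives
\[
\frac{h_k^2}{1-q_k}\le\sum_{i\ne k}\frac{h_i^2}{q_i}.
\]
The positive part of $Q$ is at most $h_k^2/(1-q_k)$, while the negative part is at least $\sum_{i\ne k}h_i^2/q_i+h_k^2/q_k$, since each coefficient $1/(1-q_i)-1/q_i$ is bounded above by $-1/q_i$ up to the nonnegative $1/(1-q_i)$ terms. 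The delicate point is to absorb the remaining positive terms $\sum_{i\ne k}h_i^2/(1-q_i)$ as well.

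We handle this by a sharper weighting. Apply Cauchy--Schwarz with weights $w_i:=q_i(1-q_i)/(1-2q_i)$ for $i\ne k$; these are positive since $q_i<1/2$ for $i\neq k$. The resulting inequality reads
\[
h_k^2\Bigl(\frac1{1-q_k}-\frac1{q_k}\Bigr)\le\Bigl(\sum_{i\ne k}w_i\Bigr)^{-1}\cdots,
\]
and the requirement reduces to the scalar inequality
\[
\sum_{i\ne k}\frac{q_i(1-q_i)}{1-2q_i}\le\frac{q_k(1-q_k)}{2q_k-1}.
\]
This last inequality is the expected main obstacle. We plan to prove it using $\sum_{i\ne k}q_i=1-q_k$ and convexity of $t\mapsto t(1-t)/(1-2t)$ on $[0,1/2)$. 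Since this function is convex and vanishes at $0$, the sum is maximized by concentrating all the mass $1-q_k$ in one coordinate. In that extreme case the left side equals $(1-q_k)q_k/(2q_k-1)$, which is exactly the right side, giving equality. Alternatively, one could cite Vontobel's proof of concavity of the Bethe entropy restricted to a single vertex.
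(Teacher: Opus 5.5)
Your proof is correct. It takes a genuinely different route from the paper, which gives no argument of its own and simply cites Vontobel's Theorem~20 for both nonnegativity and concavity. What your version buys is self-containment and transparency; the citation buys only brevity.

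For nonnegativity, you use the identity $S(\bs q)=\sum_{j:\,q_j>0}q_j\log\bigl(\prod_{i\ne j}(1-q_i)/q_j\bigr)$. It is valid away from point masses, where $S=0$ anyway. Combined with the Weierstrass product inequality $\prod_{i\ne j}(1-q_i)\ge q_j$, it gives a clean one-line certificate.

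For concavity, your tangent-space Hessian check isolates exactly where the simplex constraint enters. Set $c_i:=\frac1{1-q_i}-\frac1{q_i}$; only the largest coordinate $q_k$ can have $c_k>0$. Taking $w_i:=-1/c_i$ for $i\ne k$, Cauchy--Schwarz gives $Q(\bs h)\le\bigl(c_k\sum_{i\ne k}w_i-1\bigr)\sum_{i\ne k}h_i^2/w_i$. So everything reduces to superadditivity of $\psi(t)=t(1-t)/(1-2t)$ together with $\psi(1-q_k)=1/c_k$. Since Cauchy--Schwarz can be made tight here, this scalar inequality is in fact equivalent to negative semidefiniteness. A slight refinement using strict convexity of $\psi$ even gives negative definiteness at interior points when $d\ge3$. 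None of this is visible from the citation.

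When you write it up, make three fixes:
\begin{enumerate}
\item Delete the abandoned first Cauchy--Schwarz attempt (the ``positive part/negative part'' paragraph).
\item Correct the garbled display. The inequality you need is $h_k^2\le\bigl(\sum_{i\ne k}w_i\bigr)\sum_{i\ne k}h_i^2/w_i$, with the factor $\sum_{i\ne k}w_i$ rather than its inverse.
\item Justify the convexity of $\psi$, e.g.\ via $\psi(t)=\tfrac14\bigl((1-2t)^{-1}-(1-2t)\bigr)$. Also note that every $q_i$ with $i\ne k$ lies in $[0,1-q_k]\subset[0,1/2)$. Hence convexity with $\psi(0)=0$ (equivalently, $\psi(t)/t$ nondecreasing) applies on the relevant range.
\end{enumerate}
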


\begin{proof}
This is \cite[Theorem~20]{Vontobel}.
\end{proof}

% The Bethe entropy gives the following variational representation of
% the Bethe permanent.

\begin{proposition}
\label{prop:bethe-variational}
For every nonnegative matrix $A$ such that
$\on{PM}(G_A)\ne\emptyset$,
\[
  \log\on{Bethe}(A)
  =
  \max_{\bs{q}\in\on{PM}(G_A)}
  \left\{
    \sum_{e\in E(G_A)}q_e\log A_e
    +H_{\mr B}(\bs{q})
  \right\}.
\]
\end{proposition}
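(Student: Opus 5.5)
This is just the definition rewritten. I take logarithms in \cref{eq:bethe-product} and compare with the definition of $H_{\mr B}$ in \cref{eq:bethe-entropy}; no earlier result beyond the definitions is needed.

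\textbf{Step 1: logarithm of a single factor.} Fix $\bs q\in\on{PM}(G_A)$ and an edge $e$ with $q_e\in(0,1)$. The logarithm of the factor $(A_e/q_e)^{q_e}(1-q_e)^{1-q_e}$ is
\[
q_e\log A_e-q_e\log q_e+(1-q_e)\log(1-q_e).
\]
This is exactly the energy contribution $q_e\log A_e$ plus the edge summand of $H_{\mr B}(\bs q)$. Since every edge of $G_A$ has $A_e>0$, the quantity $\log A_e$ is finite.

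\textbf{Step 2: boundary cases.} The only delicate point is the boundary $q_e\in\{0,1\}$, where the factor is defined by continuity. If $q_e=0$, the factor tends to $1$, which matches $0\cdot\log A_e+0+\log 1=0$. If $q_e=1$, the factor tends to $A_e\cdot 0^0=A_e$, since $(1-t)^{1-t}\to1$. This matches $\log A_e-0+0$ under the convention $0\log0=0$. So the identity from Step 1 holds for every $q_e\in[0,1]$.

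\textbf{Step 3: summing over edges and passing to the maximum.} Summing over $e\in E(G_A)$, the logarithm of the product in \cref{eq:bethe-product} equals $\sum_e q_e\log A_e+H_{\mr B}(\bs q)$ for every $\bs q\in\on{PM}(G_A)$. The product is strictly positive on $\on{PM}(G_A)$ by Steps 1 and 2, so $\log$ is defined everywhere on it. Because $\log$ is increasing, it commutes with the maximum.

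The maximum is attained because the objective is continuous on the compact polytope $\on{PM}(G_A)$, which is nonempty by hypothesis. Continuity follows from the continuity of $t\mapsto t\log t$ on $[0,1]$. This gives the stated formula.

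I expect no real obstacle. The only care needed is the boundary continuity check in Step 2, and that check is routine.
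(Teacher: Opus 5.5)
Your proposal is correct and follows the paper's argument, which just takes logarithms in \cref{eq:bethe-product} and matches each edge factor with the corresponding summand of $H_{\mr B}$ in \cref{eq:bethe-entropy}. Your explicit treatment of the boundary values $q_e\in\{0,1\}$, of positivity of the product, and of attainment of the maximum fills in details the paper leaves implicit.
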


\begin{proof}
This follows by taking logarithms in
\cref{eq:bethe-product}; see also \cite[Corollary~15]{Vontobel}.
\end{proof}

\subsection{KL divergence and information gain}

For a finite set $\Omega$, let $\Delta(\Omega)$ denote the set of
probability distributions on $\Omega$. Thus, under the natural
identification, $\Delta_d=\Delta([d])$.

\begin{definition}\label{def:kl-divergence}
Let $\mu,\nu\in\Delta(\Omega)$. The Kullback--Leibler divergence,
also called the relative entropy, of $\mu$ relative to $\nu$ is
\[
  \KL(\mu\|\nu)
  :=\sum_{\omega\in\Omega}
     \mu(\omega)\log\frac{\mu(\omega)}{\nu(\omega)},
\]
with the conventions
$0\log\frac0y=0$ for $y\geq 0$ and $x\log\frac x0=+\infty$ for $x>0$.

\end{definition}

We next recall the connection between KL divergence and mutual
information. For finite random variables $U$ and $Y$, their mutual information is
\begin{align}\label{eq:mutual-information}
  I(U;Y)
  &:=H(U)-\E_Y H\bigl(\mc{L}(U\mid Y)\bigr)
  =\E_Y\KL\bigl(
       \mc{L}(U\mid Y)\,\big\|\,\mc{L}(U)
     \bigr).
\end{align}
Thus,
mutual information can be viewed either as the reduction in the
entropy of $U$ when $Y$ is observed or as the expected divergence of
the posterior law of $U$ from its prior law. The first viewpoint
extends naturally to functionals other than entropy.

\begin{definition}\label{def:Psi-information}
Let $U$ and $Y$ be finite random variables, with $U$ taking values in
$\Omega$, and let $\Psi:\Delta(\Omega)\to\R$. The
{$\Psi$-information gain} from observing $Y$ is
\begin{equation}\label{eq:Psi-information}
  \mathsf I_\Psi(U;Y)
  :=\Psi(\mc{L}(U))
    -\E_Y\Psi(\mc{L}(U\mid Y)).
\end{equation}
In particular, $\mathsf I_H(U;Y)=I(U;Y)$.
\end{definition}

Unlike ordinary mutual information, $\Psi$-information gain need not
be symmetric in its two arguments. Nevertheless, when $\Psi$ is
concave, $\Psi$-information gain is nonnegative and satisfies a
data-processing inequality under further randomization of the
observed variable. We record these properties next.

\begin{lemma}
\label{lem:data-processing}
Let $U,Y,Z$ be finite random variables, with $U$ taking values in
$\Omega$, and let $\Psi:\Delta(\Omega)\to\R$ be concave. Suppose that
$U$--$Y$--$Z$ is a Markov chain; equivalently, $U$ and $Z$ are conditionally
independent given $Y$. Then
\[
  \mathsf I_\Psi(U;Y)
  \geq \mathsf I_\Psi(U;Z)
  \geq 0.
\]
\end{lemma}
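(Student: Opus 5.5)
The plan is to reduce both inequalities to concavity of $\Psi$ via Jensen's inequality applied to conditional laws, using the Markov structure to write $\mc{L}(U\mid Z)$ as a mixture of the laws $\mc{L}(U\mid Y)$.

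\emph{Step 1: nonnegativity.} Since $\E_Z\,\mc{L}(U\mid Z)=\mc{L}(U)$ as elements of $\Delta(\Omega)$, and the expectation is a finite convex combination, concavity of $\Psi$ gives
\[
  \Psi(\mc{L}(U))=\Psi\bigl(\E_Z\mc{L}(U\mid Z)\bigr)\geq \E_Z\Psi(\mc{L}(U\mid Z)).
\]
This is exactly $\mathsf I_\Psi(U;Z)\geq 0$. Conditioning only on values $z$ with $\P(Z=z)>0$ ensures everything is well defined.

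\emph{Step 2: the data-processing inequality.} The Markov property says $\mc{L}(U\mid Y=y,Z=z)=\mc{L}(U\mid Y=y)$ whenever $\P(Y=y,Z=z)>0$. Hence, for each $z$ with $\P(Z=z)>0$,
\[
  \mc{L}(U\mid Z=z)=\sum_{y}\P(Y=y\mid Z=z)\,\mc{L}(U\mid Y=y),
\]
a convex combination of the laws $\mc{L}(U\mid Y=y)$. Applying concavity of $\Psi$ to this combination gives
\[
  \Psi(\mc{L}(U\mid Z=z))\geq\sum_y\P(Y=y\mid Z=z)\,\Psi(\mc{L}(U\mid Y=y)).
\]
Now average over $z$ with weights $\P(Z=z)$. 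Because $\sum_z\P(Z=z)\P(Y=y\mid Z=z)=\P(Y=y)$, this yields
\[
  \E_Z\Psi(\mc{L}(U\mid Z))\geq\E_Y\Psi(\mc{L}(U\mid Y)).
\]
Subtracting both sides from $\Psi(\mc{L}(U))$ gives $\mathsf I_\Psi(U;Y)\geq\mathsf I_\Psi(U;Z)$.

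\emph{Main point of care.} There is no real obstacle; the only thing to handle is the justification of the mixture identity. I would verify it by writing
\[
  \P(U=\omega\mid Z=z)=\sum_y\P(Y=y\mid Z=z)\,\P(U=\omega\mid Y=y,Z=z),
\]
then invoking conditional independence to drop $Z=z$ from the last factor. Terms with $\P(Y=y,Z=z)=0$ carry zero weight, so they can simply be omitted.
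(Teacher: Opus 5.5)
Your proposal is correct and follows essentially the same route as the paper: nonnegativity from Jensen applied to $\mc{L}(U)=\E_Z\mc{L}(U\mid Z)$, and data processing from expressing $\mc{L}(U\mid Z)$ as the conditional mixture $\E[\mc{L}(U\mid Y)\mid Z]$ via the Markov property, then applying Jensen and averaging over $Z$. Your version merely spells out the mixture identity and the reweighting $\sum_z\P(Z=z)\P(Y=y\mid Z=z)=\P(Y=y)$ explicitly in finite sums.
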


\begin{proof}
We first note that $\Psi$-information gain is nonnegative. Indeed, for any finite random variable $W$,
\[
  \mc{L}(U)=\E_W\mc{L}(U\mid W).
\]
Concavity of $\Psi$ and Jensen's inequality therefore give
\[
  \Psi(\mc{L}(U))
  \geq \E_W\Psi(\mc{L}(U\mid W)),
\]
and hence $\mathsf I_\Psi(U;W)\geq0$.

It remains to prove data processing. The Markov-chain assumption
implies
\[
  \mc{L}(U\mid Z)
  =\E\bigl[\mc{L}(U\mid Y)\mid Z\bigr].
\]
Indeed, this identity holds coordinatewise because, for every
$\omega\in\Omega$,
\[
  \P(U=\omega\mid Z)
  =\E\bigl[\P(U=\omega\mid Y)\mid Z\bigr].
\]
Applying Jensen's inequality gives
\[
  \Psi(\mc{L}(U\mid Z))
  \geq
  \E\bigl[\Psi(\mc{L}(U\mid Y))\mid Z\bigr].
\]
Taking expectations and subtracting from $\Psi(\mc{L}(U))$ yields
$
  \mathsf I_\Psi(U;Y)\geq\mathsf I_\Psi(U;Z),
$
as claimed.
\end{proof}

\subsection{The Anari--Rezaei functional}

We next introduce the local functional underlying the sharp universal
bound of Anari and Rezaei~\cite{AR}. We regard a permutation
$\pi\in\on{Sym}(d)$ as the ordered list
$
  \pi(1),\ldots,\pi(d).
$

\begin{definition}\label{def:s-i-pi}
For $\bs{q}\in\Delta_d$, $\pi\in\on{Sym}(d)$, and $t\in[d]$, define
the {suffix mass}
\[
  s_t^\pi(\bs{q})
  :=\sum_{r=t}^d q_{\pi(r)}.
\]
Thus, $s_t^\pi(\bs{q})$ is the total $\bs{q}$-mass of the elements in
position $t$ or later in the ordering $\pi$. Define
\begin{align*}
  F(\bs{q})
  &:=\E_\pi\left[
       \sum_{t=1}^d q_{\pi(t)}
       \log\frac{s_t^\pi(\bs{q})}{q_{\pi(t)}}
     \right]
     -S(\bs{q})
  =\E_\pi\left[
       \sum_{t=1}^d q_{\pi(t)}
       \log s_t^\pi(\bs{q})
     \right]
     +H_{\mr c}(\bs{q}),
\end{align*}
where $\pi$ is uniformly distributed on $\on{Sym}(d)$. 
\end{definition}

\begin{remark}
\label{rem:zero-coordinates}
The functionals $H$, $H_{\mr c}$, $S$, and $F$ are unchanged by
inserting or deleting zero coordinates, and they vanish at point
masses. For $H$, $H_{\mr c}$, and $S$, this follows immediately from
their definitions. For $F$, a zero coordinate contributes nothing,
does not change any suffix mass, and the relative order induced on
the remaining coordinates is still uniform.
\end{remark}

In the work of Anari and Rezaei, the authors use a uniformly random ordering to
define an auxiliary comparison distribution on perfect matchings, and
the suffix masses $s_t^\pi(\bs{q})$ appear as its local normalizing
factors. Comparing the resulting local entropy term with
$S(\bs{q})$ produces $F(\bs{q})$. We need the following sharp
estimate.

\begin{proposition}[{\cite[Lemma~10]{AR}}]\label{prop:AR}
For every $d\geq1$ and every $\bs{q}\in\Delta_d$,
\[
  F(\bs{q})\leq\frac{\log2}{2}.
\]
\end{proposition}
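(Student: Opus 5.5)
We will prove the bound by induction on $d$, using a recursion for the random-ordering term obtained by conditioning on the first element of $\pi$. Write
\[
G(\bs q):=\E_\pi\Bigl[\sum_{t=1}^d q_{\pi(t)}\log s_t^\pi(\bs q)\Bigr],
\]
so that $F=G+H_{\mr c}$. Since $s_1^\pi=1$, the first term vanishes. Given $\pi(1)=i$, the remaining ordering is uniform on $[d]\setminus\{i\}$. Let $\bs q^{(-i)}\in\Delta_{d-1}$ be the vector $(q_j)_{j\neq i}$ rescaled by $1/(1-q_i)$, for $q_i<1$. Then every later suffix mass equals $(1-q_i)$ times the corresponding suffix mass for $\bs q^{(-i)}$. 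This gives the recursion
\[
G(\bs q)=\frac1d\sum_{i=1}^d (1-q_i)\Bigl[\log(1-q_i)+G\bigl(\bs q^{(-i)}\bigr)\Bigr].
\]
By \cref{rem:zero-coordinates}, we may assume all $q_i>0$, and the case $d=1$ is trivial.

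The base case $d=2$ is explicit. For $\bs q=(x,1-x)$ one gets $G=\tfrac12\bigl(x\log x+(1-x)\log(1-x)\bigr)=-h(x)/2$ and $H_{\mr c}=h(x)$. Hence $F=h(x)/2\le(\log2)/2$, with equality at $x=1/2$, consistent with the $4$-cycle extremizer. This also shows that crude estimates are hopeless. For instance, replacing each suffix mass by its conditional mean via Jensen (using the P\'olya description: given $\pi^{-1}(i)$, each $j$ lies after $i$ with a common uniform probability $U$) yields $\sum_i\phi(q_i)$ with $\phi(1/2)$ too large. So the induction must be carried out exactly through the recursion above.

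For the inductive step, the bare hypothesis $F\le(\log 2)/2$ is not self-improving. Substituting $G(\bs q^{(-i)})=F(\bs q^{(-i)})-H_{\mr c}(\bs q^{(-i)})$ into the recursion reduces the claim to an inequality involving only $H_{\mr c}$ of $\bs q$ and of the $\bs q^{(-i)}$, the weights $(1-q_i)/d$, and the inductive bound. The factors $(1-q_i)$ average to $(d-1)/d<1$, so some slack is available. The plan is to use a strengthened hypothesis of the form $F(\bs q)\le\Psi(\bs q)$ for an explicit symmetric $\Psi\le(\log2)/2$ that equals $h(x)/2$ on $\Delta_2$. The first candidate to try is $\Psi$ depending on $\bs q$ through $H_{\mr c}$ and $\max_i q_i$, chosen so that the recursion closes. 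One would then verify the resulting scalar inequality in the $q_i$ by a convexity argument in each coordinate.

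The main obstacle is finding the right strengthened hypothesis $\Psi$ and verifying the inequality that closes the induction. This is where the sharp constant is decided: the extremal configuration $(1/2,1/2)$ sits at the bottom of the induction, and the bound must not degrade as $d$ grows. If no clean $\Psi$ works, the fallback is the route behind \cite[Lemma~10]{AR}. There one would write $G$ as an integral over the P\'olya parameter $U$ and reduce, by a smoothing argument (merging or equalizing coordinates does not decrease $F$), to vectors with at most two distinct values. Finally, one would check that one-parameter family numerically and analytically.
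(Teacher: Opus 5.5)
Your recursion for $G$ and the computation $F=h(x)/2$ on $\Delta_2$ are correct. So is your diagnosis that the bare bound does not propagate. Write the recursion as $F(\bs q)=\tfrac{d-1}{d}H_{\mr c}(\bs q)+\tfrac1d\sum_i(1-q_i)\bigl(F-H_{\mr c}\bigr)(\bs q^{(-i)})$ and insert $F(\bs q^{(-i)})\le\tfrac{\log2}{2}$; this yields only $\tfrac23\log2$ as $\bs q\to(\tfrac12,\tfrac12,0)$.

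The proposal stops exactly there, and that is the genuine gap. No $\Psi$ is specified, the scalar inequality that would close the induction is never written down, and nothing is verified. The inductive step is the entire content of the statement; the paper does not prove it but imports it from \cite[Lemma~10]{AR}. As it stands, you have a proof only for $d\le2$.

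Choosing $\Psi$ is genuinely delicate. The natural candidate $\tfrac12H_{\mr c}$ is exact on $\Delta_2$. It is also an upper bound on $\Delta_3$, where one computes $F=\tfrac56H_{\mr c}-\tfrac13H=\tfrac12H_{\mr c}-\tfrac13S$. Yet at the uniform vector of $\Delta_3$ it equals $\log\tfrac32>\tfrac{\log2}{2}$, so it cannot by itself give the constant.

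Your fallback is also not a proof, and its key step is false as stated. Merging can decrease $F$: $F(\tfrac12,\tfrac12)=\tfrac{\log2}{2}>0=F(1)$. Equalizing can decrease $F$ too. From the closed form on $\Delta_3$, $\partial_iF=\tfrac56\log(1-q_i)+\tfrac13\log q_i+\tfrac76$, which is increasing in $q_i$ for $q_i<2/7$. Hence $F$ is not Schur-concave; concretely, $F(0.6,0.25,0.15)\approx0.2878>F(0.6,0.2,0.2)\approx0.2862$.

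A reduction to vectors with at most two distinct values would therefore need a real critical-point analysis, not a monotone smoothing move. For $d\ge4$ this is harder because $F$ is not separable. The suffix set at position $t$ is a uniformly random $(d-t+1)$-subset, so $G(\bs q)=\sum_{k=1}^{d}\frac{1}{k\binom{d}{k}}\sum_{|T|=k}s(T)\log s(T)$ with $s(T)=\sum_{i\in T}q_i$. This involves sums over pairs from $d=4$ on.

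In short, the missing ingredient is a concrete inequality valid for all $d$ simultaneously, and neither of your routes supplies it.
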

Equality is attained, for example, at
$\bs{q}=(1/2,1/2)\in\Delta_2$. 

\section{Reduction to a good vertex}
\label{sec:good-vertex-reduction}

We begin by isolating the entropy inequality underlying the proof of
\cref{thm:main}. Fix an even integer $g\geq4$. For a bipartite graph
$G=(V,E)$ of girth at least $g$ and $\bs{p}\in\on{PM}(G)$, define
\begin{equation}\label{eq:Phi-def}
  \Phi_G(\bs{p})
  :=H_{\mr B}(\bs{p})
    +\frac2g\sum_{v\in V}F(\bs{p}_v)
  =\frac12\sum_{v\in V}S(\bs{p}_v)
    +\frac2g\sum_{v\in V}F(\bs{p}_v).
\end{equation}
We suppress the subscript $G$ when the graph is clear from context.

\begin{theorem}\label{thm:entropy}
Let $G=(L\sqcup R,E)$ be a bipartite graph of girth at least $g$, with
$|L|=|R|=n$. Let $\mu$ be a probability distribution on the perfect
matchings of $G$, and let $
  p_e:=\P_{M\sim\mu}(e\in M)$, $\bs{p}:=(p_e)_{e\in E}$.
Then
\[
  H(\mu)\leq\Phi_G(\bs{p}).
\]
Equivalently,
\[
  H(\mu)-H_{\mr B}(\bs{p})
  \leq\frac2g\sum_{v\in V}F(\bs{p}_v).
\]
\end{theorem}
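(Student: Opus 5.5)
We will prove $H(\mu)\leq\Phi_G(\bs p)$ by induction on $n$, following the outline in the introduction. If $n=1$ (or more generally if $\mu$ is a point mass), then $H(\mu)=0$. By \cref{rem:zero-coordinates}, every local vector $\bs p_v$ is then a point mass after deleting zeros, so $\Phi_G(\bs p)=0$. For the inductive step, let $M\sim\mu$ and for $u\in L$ let $X_u$ be the partner of $u$. By the chain rule,
\[
H(\mu)=H(X_u)+\E_{X_u}H\bigl(\mc L(M\mid X_u)\bigr).
\]
Conditioned on $X_u=w$, the matching minus the edge $uw$ is a distribution on perfect matchings of $G-u-w$, which still has girth at least $g$. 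Its marginals are the restriction of $\bs p^{X_u}$.

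Next we check that $\Phi_{G-u-w}$ of the restricted vector equals $\Phi_G(\bs p^{X_u})$. At $u$ and $w$ the conditional local vectors are point masses, so they contribute $0$. At every other vertex, the edges to $u$ or $w$ carry conditional mass $0$ and can be deleted by \cref{rem:zero-coordinates}. The induction hypothesis then gives $H(\mu)\leq H(X_u)+\E\Phi_G(\bs p^{X_u})$. It therefore suffices to find a \emph{good} vertex $u\in L$, meaning one with
\[
\Phi_G(\bs p)-\E_{X_u}\Phi_G(\bs p^{X_u})\geq H(X_u).
\]

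To find such a vertex, split $\Phi=\Phi_0+\frac1g\sum_v\Gamma(\bs p_v)$ with $\Gamma=H-2H_{\mr c}+2F$. This identity follows directly from $\frac12 S+\frac2gF=\frac1gH_{\mr c}+\frac{g-2}{2g}S+\frac1g(H-2H_{\mr c}+2F)$.

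The first step is the girth estimate
\[
\Phi_0(\bs p)-\E\Phi_0(\bs p^{X_u})-H(X_u)\geq-\tfrac2gS(\bs p_u).
\]
We prove it by writing $\mathsf I_H$ and $\mathsf I_{H_{\mr c}}$ at each vertex as sums over edges of the pieces of $I(X_u;E_e)$ coming from the events $E_e=1$ and $E_e=0$. By the matching constraint, the $E_e=0$ pieces at $v$ are dominated, via convexity of $\KL$, by the $E_f=1$ pieces for $f\ni v$, $f\neq e$. This lets us define a flow of information on directed edges: the amount entering $v$ through $e$ is passed on through the other edges at $v$.

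Decomposing this flow into paths and cycles gives nonbacktracking walks. The mass leaving $u$ totals $H(X_u)$, and any return to $u$ closes a nonbacktracking cycle of length at least $g$. Counting each unit of flow along at least $g$ steps, with the weights $1/g$ and $(g-2)/(2g)$ chosen so that each step is paid for, yields the inequality, except for a loss at $u$ of size $\frac2gS(\bs p_u)$. This is the step I expect to be the main obstacle: making the flow construction rigorous, including the conditional KL comparisons and getting the per-step accounting to match exactly the coefficients in $\Phi_0$.

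The second step is to average over $u$. Swapping the sums, it suffices to show for each $v\in L$ that
\[
\sum_{u\in L}\mathsf I_\Gamma(\bs p_v;X_u)\geq S(\bs p_v).
\]
Here $X_v$ reveals the partner of $v$, while $X_u$ for $u\neq v$ only reveals whether a given neighbour is taken, i.e.\ deletes one coordinate. This should follow from a recursive identity for $\Gamma$ under deletion of one coordinate. That identity is plausible because $F$ is defined through random orderings, whose first element is a size-biased pick. Combining it with concavity and data processing (\cref{lem:data-processing}) handles the coarser observations. The case $v\in R$ is symmetric, using $X^{-1}$.

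Summing over all $v$ gives $\sum_u\frac1g\sum_v\mathsf I_\Gamma\geq\frac2g\sum_{u\in L}S(\bs p_u)$, which cancels the total deficit from the girth estimate. Hence the average over $u\in L$ of $\Phi-\E\Phi^{X_u}-H(X_u)$ is nonnegative, so some $u$ is good, completing the induction.
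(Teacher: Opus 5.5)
Your outer argument matches the paper's and is correct: induction through a good vertex, invariance of $\Phi$ when $u$ and $w$ are deleted, the split $\Phi=\Phi_0+\frac1g\sum_v\Gamma(\bs p_v)$, and the final cancellation via $\sum_{v\in L}S(\bs p_v)=\sum_{v\in R}S(\bs p_v)=H_{\mr B}(\bs p)$. However, the two estimates that carry all the content are only sketched, and each is missing a concrete ingredient.

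\textbf{Girth estimate.} The edge bound $\mathsf J_e^-\le\sum_{f\ni v,\,f\ne e}\mathsf J_f^+$ from convexity of $\KL$ does not by itself let you route flow nonbacktrackingly through $v$. You need a local transport $\alpha^v_{ef}\ge0$ ($e\ne f$) with $\sum_f\alpha^v_{ef}=\mathsf J_e^-$ and $\sum_e\alpha^v_{ef}\le\mathsf J_f^+$. Its Hall (max-flow) condition also requires the vertex bound $\mathsf J_v^-\le\mathsf J_v^+$; summing the edge bounds gives only $\mathsf J_v^-\le(d_v-1)\mathsf J_v^+$. The vertex bound comes from concavity of $S$, since $\mathsf J_v^+-\mathsf J_v^-=S(\bs p_v)-\E S(\bs p_v^{X_u})$. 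The leftovers $\mathsf J_e^+-\sum_{f\ne e}\alpha^v_{fe}\ge0$ become source/sink arcs. So the total path weight is $\frac12\sum_v(\mathsf J_v^+-\mathsf J_v^-)$, and the path weight starting at $u$ is $\mathsf J_u^+-\mathsf J_u^-=S(\bs p_u)$.

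The coefficients of $\Phi_0$ then come from a per-component count, not from ``each unit travels $g$ steps''. Let $k(C)$ count departures from $u$ and $m(C)$ count $R\to L$ traversals. Each return to $u$ forces $g/2$ of the latter. Hence $gk\le 2m$ for cycles, and $gk\le 2m+(g-2)+2\varepsilon$ for paths, where $\varepsilon=1$ iff the path starts at $u$. Summing against the decomposition weights gives $gH(X_u)\le g\bigl(\Phi_0(\bs p)-\E\Phi_0(\bs p^{X_u})\bigr)+2S(\bs p_u)$. The $2\varepsilon$ terms are exactly the deficit.

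\textbf{Permutation estimate.} Your description is wrong at the key point. For $u\ne v$, observing $X_u=w$ does \emph{not} delete a coordinate. The posterior $\mc L(X_v\mid X_u=w)$ depends on the joint law and is generally not $\bs p_v^{\setminus w}$; it can even be a point mass. Moreover, for fixed $u$ the deletion observation need not be a garbling of $X_u$, so data processing cannot be applied to individual $X_u$.

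The missing idea is to randomize the index and then forget it. Take $K$ uniform on $L\setminus\{v\}$, independent of $M$. Then $\mathsf I_\Gamma(Z_v;(K,X_K))$ is the average of $\mathsf I_\Gamma(Z_v;X_u)$ over $u\ne v$. Meanwhile, given $Z_v$, $W=X_K$ is uniform on $R\setminus\{Z_v\}$, so $\mc L(Z_v\mid W=a)$ is exactly the deletion of $\bs p_v$ padded with zeros to $\Delta_n$.

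From here you need the two facts you only call plausible.
\begin{enumerate}
\item The identity $d\Gamma(\bs q)=S(\bs q)+\sum_a(1-q_a)\Gamma(\bs q^{\setminus a})$. It follows from the first-element recursion for $F+S$ together with $\sum_a(1-q_a)H(\bs q^{\setminus a})=(d-1)H(\bs q)-H_{\mr c}(\bs q)$, and it yields $\mathsf I_\Gamma(Z_v;Z_v)+(n-1)\mathsf I_\Gamma(Z_v;W)=S(\bs p_v)$.
\item Concavity of $\Gamma$, which is what data processing from $(K,X_K)$ to $X_K$ requires. This is not automatic, since $-2H_{\mr c}$ is convex. It follows from the identity above by induction on $d$, because perspectives of concave functions are concave.
\end{enumerate}

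Finally, $v\in R$ is not literally symmetric, because the observations are still the forward coordinates $X_u$. You need $\sum_u\mathsf I_\Gamma((X^{-1})_w;X_u)=\sum_y\mathsf I_\Gamma((X^{-1})_w;(X^{-1})_y)$. This holds because $\{X_u=y\}=\{(X^{-1})_y=u\}$, even though the individual terms differ.
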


We first show that this entropy inequality implies the main theorem. The lower bound in \cref{thm:main} is Gurvits's inequality
\cite{Gurvits}, while the sharpness statement follows from the cycle
construction in \cref{ex:cycle-extremizers}. It therefore remains to
derive the upper bound from \cref{thm:entropy}.

\begin{proof}[Proof of the upper bound in \cref{thm:main} assuming
\cref{thm:entropy}]
Let $A\in\R_{\geq0}^{n\times n}$ have support graph
$G_A=(V,E)$ of girth at least $g$. If
$\on{PM}(G_A)=\emptyset$, then
$
  \on{per}(A)=\on{Bethe}(A)=0,
$
and there is nothing to prove. We may therefore assume that
$\on{PM}(G_A)\ne\emptyset$. Since the bipartite perfect-matching polytope is integral, $G_A$ has a perfect matching in this case. Every edge of
$G_A$ has positive weight, and hence $\on{per}(A)>0$.

Consider the probability distribution $\mu$ on the set of perfect matchings $\mc{M}(G_A)$ given by
\[
  \mu(M):=\frac{\prod_{e\in M}A_e}{\on{per}(A)},
\]
and let $p_e:=\P_{M\sim\mu}(e\in M)$,  $\bs{p}:=(p_e)_{e\in E}$. Then $\bs{p}\in\on{PM}(G_A)$ and
\begin{align*}
  H(\mu)
  &=-\E_{M\sim\mu}\log\mu(M)
  =\log\on{per}(A)-\sum_{e\in E}p_e\log A_e.
\end{align*}
It follows from \cref{thm:entropy} that
\begin{align*}
  \log\on{per}(A)
  &\leq
    \sum_{e\in E}p_e\log A_e
    +H_{\mr B}(\bs{p})
    +\frac2g\sum_{v\in V}F(\bs{p}_v)\\
  &\leq
    \log\on{Bethe}(A)
    +\frac2g\cdot 2n\cdot\frac{\log2}{2}
    &&\text{(\cref{prop:bethe-variational,prop:AR})}\\
  &=\log\on{Bethe}(A)+\frac{2n\log2}{g},
\end{align*}
which, after exponentiating, gives the upper bound.
\end{proof}

We now reduce \cref{thm:entropy} to the existence of a ``good'' vertex. For
the remainder of the paper, fix a graph $G=(L\sqcup R,E)$ and a
distribution $\mu$ as in \cref{thm:entropy}. As before, let
\begin{equation}\label{eq:p-defns}
  p_e:=\P_{M \sim \mu}(e\in M),
  \qquad
  \bs{p}:=(p_e)_{e\in E}.
\end{equation}

For each $u\in L$, let $X_u$ be the unique vertex $w\in R$ such that
$uw\in M$. Thus
$
  X=(X_u)_{u\in L}
$
is a random bijection from $L$ to $R$. Also let
\begin{equation}\label{eq:p-defns-cond}
  p_e^{X_u}:=\P_{M \sim \mu}(e\in M\mid X_u),
  \qquad
  \bs{p}^{X_u}:=(p_e^{X_u})_{e\in E}.
\end{equation}

\begin{definition}\label{def:good-vertex}
For $u\in L$, define
\begin{equation}\label{eq:gap-def}
  \on{gap}(u)
  :=\Phi(\bs{p})
    -\E_{X_u}\Phi(\bs{p}^{X_u})
    -H(X_u).
\end{equation}
We call $u$ a {good vertex} if $\on{gap}(u)\geq0$.
\end{definition}

\cref{thm:entropy} follows from the following proposition, which says that a good vertex always exists. In fact, we prove a stronger averaged statement. 
\begin{proposition}
\label{prop:good-vertex}
Let $g\geq4$ be even, and let $G=(L\sqcup R,E)$ be a bipartite
graph of girth at least $g$, with $|L|=|R|=n\geq1$. Let $\mu$ be a
probability distribution on the perfect matchings of $G$, and define
$\bs{p}$, $\bs{p}^{X_u}$, and $\on{gap}(u)$ as in
\cref{eq:p-defns,eq:p-defns-cond,eq:gap-def}. Then
\[
  \sum_{u\in L}\on{gap}(u)\geq0.
\]
In particular, there exists a good vertex $u\in L$.
\end{proposition}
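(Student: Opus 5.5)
The plan is to split the potential as $\Phi=\Phi_0+(\Phi-\Phi_0)$, where
\[
\Phi_0(\bs p):=\frac1g\sum_{v\in V}H_{\mr c}(\bs p_v)+\frac{g-2}{2g}\sum_{v\in V}S(\bs p_v),
\qquad
\Phi(\bs p)-\Phi_0(\bs p)=\frac1g\sum_{v\in V}\Gamma(\bs p_v),
\]
with $\Gamma(\bs q):=H(\bs q)-2H_{\mr c}(\bs q)+2F(\bs q)$. The second identity is checked directly from $S=H-H_{\mr c}$ and \cref{eq:Phi-def}. With this split,
\[
\on{gap}(u)=\Bigl[\Phi_0(\bs p)-\E_{X_u}\Phi_0(\bs p^{X_u})-H(X_u)\Bigr]+\frac1g\sum_{v\in V}\Bigl[\Gamma(\bs p_v)-\E_{X_u}\Gamma(\bs p_v^{X_u})\Bigr].
\]
It then suffices to prove two estimates.

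\textbf{(i) Girth estimate.} For every $u\in L$,
\[
\Phi_0(\bs p)-\E_{X_u}\Phi_0(\bs p^{X_u})-H(X_u)\ge -\tfrac2g S(\bs p_u).
\]

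\textbf{(ii) Averaged $\Gamma$ estimate.} For every $v\in V$,
\[
\sum_{u\in L}\bigl(\Gamma(\bs p_v)-\E_{X_u}\Gamma(\bs p_v^{X_u})\bigr)\ge S(\bs p_v).
\]

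Summing (i) over $u\in L$, and summing (ii) over $v\in V$ after swapping the order of summation, gives
\[
\sum_u\on{gap}(u)\ge -\tfrac2g\sum_{u\in L}S(\bs p_u)+\tfrac1g\sum_{v\in V}S(\bs p_v).
\]
Since each perfect matching meets $L$ and $R$ symmetrically, $\sum_{v\in V}S(\bs p_v)=2\sum_{u\in L}S(\bs p_u)$ is what is needed for the right-hand side to vanish. This is not automatic from degrees. I would instead prove (ii) separately for $v\in L$ and $v\in R$ in the form that contributes exactly $S(\bs p_v)$, and obtain the factor $2$ by pairing the deficit at $u$ with the contributions of $u$ and of its side-$R$ counterpart via the bijection $X^{-1}$. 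If this pairing fails, I would redistribute the deficit in (i) as $\frac1g\bigl(S(\bs p_u)+\E\,S\bigr)$ over both endpoints of the matched edge.

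\textbf{Proof of (ii).} Fix $v\in L$. Conditioning on $X_v$ collapses $\bs p_v$ to a point mass, where $\Gamma$ vanishes by \cref{rem:zero-coordinates}. Conditioning on $X_u$ with $u\ne v$ either forces $X_v$ (on the event $X_u\in N(v)$, which kills one coordinate) or, on its complement, leaves a distribution that I coarsen. By \cref{lem:data-processing}, wherever $\Gamma$ is concave, I may replace $X_u$ by the coarser variable $\mathbf 1\{X_u=w\}$ for the relevant $w$. The core of the step is then a one-coordinate deletion identity: deleting coordinate $i$ from $\bs q$ and renormalizing changes $\Gamma$ by a computable amount, and averaging these amounts telescopes into $S(\bs q)$ plus a nonnegative remainder. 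I expect the suffix-mass (random-ordering) structure of $F$ to give exactly this: removing the first element of a uniform ordering reproduces $F$ on the renormalized remainder. The case $v\in R$ is the same argument applied to $X^{-1}$.

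\textbf{Proof of (i), the main obstacle.} I would expand $H(X_u)=\sum_e I(X_u;E_e)$-type contributions over edges at $u$. For each edge $e$, I decompose the information that $X_u$ carries about $E_e$ into its $E_e=1$ and $E_e=0$ parts. Summing these over $e\ni v$ yields the information gains $\mathsf I_H$ and $\mathsf I_{H_{\mr c}}$ at $v$. Convexity of KL, together with the constraint that $E_e=0$ forces exactly one $f\ne e$ at $v$ to be present, bounds each $E_e=0$ term by the $E_f=1$ terms at the same vertex. This defines a flow on directed edges through vertices, with conservation at every vertex and source $H(X_u)$ at $u$. Decomposing the flow into paths and cycles gives nonbacktracking walks, and any walk that returns to $u$ has length at least $g$. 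A weighted count then shows that $\frac1g$ of the $H_{\mr c}$ gains plus $\frac{g-2}{2g}$ of the $S$ gains pay for $H(X_u)$, up to the loss $\frac2gS(\bs p_u)$ incurred at the endpoints.

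The hardest part is making this flow accounting precise: defining the network so that conservation holds, and verifying the excursion-length counting with the exact constants $1/g$ and $(g-2)/(2g)$.
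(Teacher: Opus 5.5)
Your split $\Phi=\Phi_0+\frac1g\sum_v\Gamma(\bs p_v)$ and the two estimates (i) and (ii) are exactly the paper's architecture, and summing (i) over $u\in L$ and (ii) over $v\in V$ is exactly how the paper concludes. However, you leave the last step open. You call $\sum_{v\in V}S(\bs p_v)=2\sum_{u\in L}S(\bs p_u)$ ``not automatic'' and offer two workarounds. The first is a pairing ``via $X^{-1}$'', but there is no fixed counterpart since $X$ is random, and the $S(\bs p_v)$ are deterministic numbers. The second is redistributing the deficit in (i), which would replace (i) by an unproved statement. Neither is needed. $S(\bs q)=\sum_i\bigl(-q_i\log q_i+(1-q_i)\log(1-q_i)\bigr)$ is a sum of one-coordinate terms, so summing over either side of the bipartition counts each edge once. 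Hence $\sum_{v\in L}S(\bs p_v)=\sum_{v\in R}S(\bs p_v)=H_{\mr B}(\bs p)$, which is \cref{eq:bethe-entropy-local}.

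The substantive gaps are in your sketches of the two estimates.

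In (ii) for $v\in L$, observing $X_u=w$ with $u\ne v$ never forces $X_v$. Moreover, $\mc L(X_v\mid X_u=w)$ is neither $\bs q^{\setminus w}$ nor determined by $\bs q=\bs p_v$, so coarsening $X_u$ to $\mathbf 1\{X_u=w\}$ gives nothing computable. You need observations whose posteriors are exactly point masses and renormalized deletions. Then the deletion identity $d\Gamma(\bs q)=S(\bs q)+\sum_a(1-q_a)\Gamma(\bs q^{\setminus a})$ makes the sum equal $S(\bs q)$; that identity comes from the first-element recursion for $F+S$, not for $F$. The paper gets such posteriors by observing $(K,X_K)$ for a uniform index $K\ne v$ and then discarding $K$: given $X_v$, $X_K$ is uniform on the remaining values. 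Your indicator idea does work in the crossed configuration. For $v\in R$, $\mathbf 1\{X_u=v\}=\mathbf 1\{Z_v=u\}$ has posteriors $\delta_u$ and $\bs q^{\setminus u}$, and summing over $u\in L$ gives exactly $S(\bs p_v)$. For $v\in L$, first use $\sum_{u\in L}\mathsf I_\Gamma(Z;X_u)=\sum_{y\in R}\mathsf I_\Gamma(Z;(X^{-1})_y)$. This holds because $\{X_u=y\}=\{(X^{-1})_y=u\}$, though it fails termwise, which is also why ``apply the same argument to $X^{-1}$'' needs justification. Every version uses data processing, hence concavity of $\Gamma$ on all of $\Delta_d$. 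You must prove this, not assume it ``wherever''; the paper inducts on $d$, reading each deletion term as a perspective of $\Gamma_{d-1}$.

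In (i), a flow conserved everywhere except for a source $H(X_u)$ at $u$ is the wrong object. Write $\mathsf J_v^\pm$ for your $\mathsf I_H$ and $\mathsf I_{H_{\mr c}}$ gains at $v$. The excess $\mathsf J_v^+-\mathsf J_v^-=S(\bs p_v)-\E S(\bs p_v^{X_u})\ge0$ must enter at every left vertex and exit at every right vertex. The resulting path components produce the constants. A path can have one more departure from $u$ than closed excursions. So it violates $g\cdot\#\{\text{departures from }u\}\le 2\cdot\#\{R\to L\text{ traversals}\}$ by at most $g-2$, plus $2$ if it starts at $u$. The total path weight is $\frac12\sum_v(\mathsf J_v^+-\mathsf J_v^-)$, and the weight starting at $u$ is $\mathsf J_u^+-\mathsf J_u^-=S(\bs p_u)$. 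This gives the $\frac{g-2}{2g}\sum_vS$ term and the $\frac2gS(\bs p_u)$ loss. Finally, routing all the $\mathsf J_e^-$ at $v$ into the $\mathsf J_f^+$, $f\ne e$, simultaneously needs $\mathsf J_v^-\le\mathsf J_v^+$ in addition to the per-edge KL-convexity bound, via a max-flow/min-cut argument.
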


\begin{proof}[Proof of \cref{thm:entropy} assuming
\cref{prop:good-vertex}]
We proceed by induction on $n:=|L|=|R|$. The case $n\leq1$ is
immediate, so assume that $n > 1$. By \cref{prop:good-vertex}, choose $u\in L$ such that
$\on{gap}(u)\geq0$, or equivalently,
\[
  \Phi(\bs{p})
  \geq H(X_u)+\E_{X_u}\Phi(\bs{p}^{X_u}).
\]

For every $w\in R$ with $\P(X_u=w)>0$, conditioning on $X_u=w$
fixes the edge $uw$. Deleting its endpoints gives an instance of size $n-1$ without
changing the potential, by \cref{rem:zero-coordinates}. The induction hypothesis
therefore gives
\[
  H(\mu\mid X_u=w)\leq\Phi(\bs{p}^{X_u=w}).
\]
Therefore,
\[
  H(\mu)
  =H(X_u)+\E_{X_u}H(\mu\mid X_u)
  \leq H(X_u)+\E_{X_u}\Phi(\bs{p}^{X_u})
  =\Phi(\bs{p})-\on{gap}(u)
  \leq\Phi(\bs{p}). \qedhere
\]
\end{proof}

\subsection{The two key estimates}

It remains to prove \cref{prop:good-vertex}. The proof combines two
estimates. The first is a pointwise inequality and is the only place
where the girth hypothesis is used. The second is an averaged
inequality that uses only that $X$ is a bijection. We state the two
estimates here and prove them in \cref{sec:girth,sec:permutation},
respectively.

Define
\begin{equation}\label{eq:Phi0-def}
  \Phi_0(\bs{p})
  :=\frac1g\sum_{v\in V}H_{\mr c}(\bs{p}_v)
    +\frac{g-2}{2g}\sum_{v\in V}S(\bs{p}_v).
\end{equation}

\begin{proposition}[Girth estimate]\label{prop:girth-estimate}
For every $u\in L$,
\begin{equation*}
  \Phi_0(\bs{p})
  -\E_{X_u}\Phi_0(\bs{p}^{X_u})
  -H(X_u)
  \geq-\frac2gS(\bs{p}_u).
\end{equation*}
\end{proposition}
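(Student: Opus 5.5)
The plan is to reduce the inequality to a purely combinatorial statement about nonnegative edge weights, and then prove that statement using the girth.

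\textbf{Step 1: split the gain into two families of edge quantities.} For each edge $e$, write $E_e=\mathbf 1_{\{e\in M\}}$ and split the mutual information $I(X_u;E_e)$ according to the value of $E_e$:
\[
a_e:=p_e\,\KL\bigl(\mc L(X_u\mid E_e=1)\,\big\|\,\mc L(X_u)\bigr),\qquad
b_e:=(1-p_e)\,\KL\bigl(\mc L(X_u\mid E_e=0)\,\big\|\,\mc L(X_u)\bigr).
\]
Both are nonnegative, and a direct computation gives $a_e=\E\,[p_e^{X_u}\log p_e^{X_u}]-p_e\log p_e$. The analogous identity holds for $b_e$ with $1-p_e$ in place of $p_e$. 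Summing over $e\ni v$ then gives
\[
\alpha_v:=H(\bs p_v)-\E H(\bs p_v^{X_u})=\sum_{e\ni v}a_e,\qquad \beta_v:=H_{\mr c}(\bs p_v)-\E H_{\mr c}(\bs p_v^{X_u})=\sum_{e\ni v}b_e .
\]
At $v=u$ the conditional vector $\bs p_u^{X_u}$ is a point mass. Hence $\alpha_u=H(X_u)$, $\beta_u=H_{\mr c}(\bs p_u)$, and $S(\bs p_u)=\alpha_u-\beta_u$.

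Substituting these into the definition of $\Phi_0$ and clearing denominators, the claimed inequality becomes a linear inequality in the $a_e,b_e$. Every edge not incident to $u$ is counted at two vertices different from $u$, while an edge incident to $u$ is counted once at $u$ and once at its other endpoint. Collecting terms, the statement to prove is
\begin{equation*}
\sum_{e\not\ni u}\bigl((g-2)a_e-(g-4)b_e\bigr)\ \geq\ (g-2)\sum_{e\ni u}b_e .
\end{equation*}
I expect the bookkeeping here to be routine, but I will double-check the coefficients at the edges $uw$.

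\textbf{Step 2: the local constraint from convexity.} For an edge $e$ and either endpoint $v$ of $e$, on the event $E_e=0$ exactly one other edge $f\ni v$ lies in $M$. Therefore $\mc L(X_u\mid E_e=0)$ is the mixture of the laws $\mc L(X_u\mid E_f=1)$, $f\ni v$, $f\neq e$, with weights $p_f/(1-p_e)$. Joint convexity of $\KL$ then yields
\[
b_e\leq\sum_{f\ni v,\,f\neq e}a_f\qquad\text{for each endpoint } v \text{ of } e .
\]
I would prove the displayed inequality from Step~1 using only these constraints, nonnegativity, and the girth hypothesis, by treating it as a linear program.

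\textbf{Step 3: flow decomposition and the girth count.} I read the constraint as a transfer: each unit of $b_e$ at an endpoint $v$ is "fed" by $a$-mass on the other edges at $v$, and each $a_f$ in turn lets its far endpoint feed $b$-mass there. This defines a flow on the directed line graph of $G$ that moves only along nonbacktracking steps $e\to f$ with $e\cap f=\{v\}$ and $f\neq e$. I would:
\begin{enumerate}
\item Fix the flow by LP duality, or greedily via a fractional assignment of each $b_e$ to the $a_f$'s.
\item Decompose it into paths and cycles.
\item Check that each component projects to a nonbacktracking walk in $G$.
\end{enumerate}
The right-hand side $(g-2)\sum_{e\ni u}b_e$ is charged to components that pass through $u$. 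Any component that leaves $u$ and must return to it before leaving again traces a closed nonbacktracking walk, which has at least $g$ edges. Along such a walk the edges away from $u$ carry alternating $a$- and $b$-mass, and each contributes at least $(g-2)a-(g-4)b$. The hope is that a length-$\geq g$ excursion accumulates enough $a$-surplus to pay $g-2$ times the $b$-mass entering $u$. Components not touching $u$ should contribute nonnegatively, because a cycle in the decomposition has equal $a$- and $b$-throughput and $(g-2)-(g-4)>0$.

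\textbf{Main obstacle.} The weighted counting in Step~3 is where I expect the difficulty. Concretely, the steps are to choose the right normalization of the flow, handle paths that terminate (the slack in the convexity constraints), and show that the coefficients $g-2$ and $g-4$ balance exactly against walks of length $g$. The extremal case of a single $g$-cycle, where all inequalities should be tight, is the test I would use to pin down these constants.
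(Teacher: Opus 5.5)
Your Step 1 reduction is correct: the target is equivalent to $\sum_{e\not\ni u}\bigl((g-2)a_e-(g-4)b_e\bigr)\geq(g-2)\sum_{e\ni u}b_e$. Step 2 is also correct. There is a genuine gap, however. You plan to derive this inequality from the per-edge constraints, nonnegativity and girth alone, and it is false under those hypotheses. Take any cubic bipartite graph of girth at least $g\geq 6$ (e.g.\ the Heawood graph for $g=6$) and set $a_e=1$, $b_e=2$ on every edge. Every per-edge constraint holds with equality, since $b_e=2=\sum_{f\ni v,\,f\ne e}a_f$. Yet the left side is $\sum_{e\not\ni u}(6-g)\leq 0$, while the right side is $6(g-2)>0$. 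The missing ingredient is the vertex-level constraint
\[
\beta_v=\sum_{e\ni v}b_e\;\leq\;\sum_{e\ni v}a_e=\alpha_v\qquad\text{for every }v\in V.
\]
It follows from $\alpha_v-\beta_v=S(\bs p_v)-\E S(\bs p_v^{X_u})$, $\bs p_v=\E\,\bs p_v^{X_u}$, and the concavity of $S$ (\cref{lem:b-properties}). You computed $\alpha_v$ and $\beta_v$ at every vertex but only used their difference at $v=u$. (For $g=4$ the per-edge constraints do suffice, which is why the issue is invisible in the Anari--Rezaei case.)

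This constraint is exactly what makes your Step 3 flow exist. Routing each $b_e$ at $v$ through the $a_f$ with $f\ni v$, $f\neq e$, without overusing any $a_f$, requires a Hall-type condition. For a single edge this is your per-edge inequality. For two or more edges the neighbourhood is every edge at $v$, so the condition becomes $\beta_v\leq\alpha_v$. The paper establishes the assignment by max-flow/min-cut in \cref{lem:local-decomposition}. Without the vertex constraint, the leftover slack $\delta_{v,e}=a_e-\sum_f\alpha^v_{fe}$, which is the source/sink mass of your terminating paths, is forced negative somewhere, and there is no flow to decompose. This is what happens in the cubic example, where each vertex has $b$-mass $6$ against $a$-mass $3$.

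Once you add the vertex constraint, your plan closes and coincides with the paper's argument. For a component of the decomposition, let $m$ be its number of $b$-traversals, $k$ its departures from $u$, $k'$ its arrivals at $u$, and $\varepsilon=1$ if a path component starts at $u$. A cycle component contributes $2m-gk\geq 0$, and a path component contributes $2m+(g-2)(1-\varepsilon)-gk'\geq 0$, both by counting the closed nonbacktracking excursions at $u$.
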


Subtracting \cref{eq:Phi0-def} from \cref{eq:Phi-def} and using
$S=H-H_{\mr c}$ gives
\begin{equation}\label{eq:Phi-decomposition}
  \Phi(\bs{p})-\Phi_0(\bs{p})
  =\frac1g\sum_{v\in V}\Gamma(\bs{p}_v),
  \qquad
  \Gamma(\bs{q})
  :=H(\bs{q})-2H_{\mr c}(\bs{q})+2F(\bs{q}).
\end{equation}

For each $v\in V$, let $Z_v$ be the vertex matched to $v$ by $M$.
Thus $Z_v=X_v$ when $v\in L$ and
$Z_v=(X^{-1})_v$ when $v\in R$. Identifying the possible values of
$Z_v$ with the edges incident to $v$, we have
\[
  \mc{L}(Z_v)=\bs{p}_v,
  \qquad
  \mc{L}(Z_v\mid X_u)=\bs{p}_v^{X_u}.
\]

\begin{proposition}[Permutation estimate]
\label{prop:averaging-estimate}
We have
\[
  \sum_{u\in L}\sum_{v\in V}
    \mathsf I_\Gamma(Z_v;X_u)
  \geq2\sum_{u\in L}S(\bs{p}_u).
\]
\end{proposition}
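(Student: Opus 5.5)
The plan is to prove the estimate one vertex $v$ at a time. For each $v\in V$ with $d:=d_v$ and $\bs q:=\bs p_v$, I will show
\[
  \sum_{u\in L}\mathsf I_\Gamma(Z_v;X_u)\geq S(\bs p_v).
\]
Summing over $v\in V$ then suffices. Indeed, $\sum_{v\in L}S(\bs p_v)=\sum_{v\in R}S(\bs p_v)$, since each side equals the edgewise expression $H_{\mr B}(\bs p)$: $\sum_{v\in L}H(\bs p_v)=-\sum_e p_e\log p_e$ and $\sum_{v\in L}H_{\mr c}(\bs p_v)=-\sum_e(1-p_e)\log(1-p_e)$, and likewise for $R$. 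Hence $\sum_{v\in V}S(\bs p_v)=2\sum_{u\in L}S(\bs p_u)$.

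Two ingredients are needed, and I would establish them first, since they are the real obstacle. The first is that $\Gamma=H-2H_{\mr c}+2F$ is concave on $\Delta_d$. The second is a delete-one inequality (presumably the content of \cref{lem:delete-one-identity}):
\[
  d\,\Gamma(\bs q)-\sum_{w=1}^d(1-q_w)\,\Gamma(\bs q^{(-w)})\geq S(\bs q),
\]
where $\bs q^{(-w)}$ is $\bs q$ with coordinate $w$ deleted and the rest renormalised by $1-q_w$. For the identity, the first step is to expand $F$ by conditioning on the first element $\pi(1)$ of the uniform ordering. The suffix masses after position $1$ are then $(1-q_{\pi(1)})$ times those of $\bs q^{(-\pi(1))}$ under a uniform ordering, which gives a recursion of the form
\[
  F(\bs q)=\frac1d\sum_w\Bigl[(1-q_w)F(\bs q^{(-w)})+(1-q_w)\log(1-q_w)+\ldots\Bigr].
\]
Similar elementary recursions hold for $H(\bs q)$ and $H_{\mr c}(\bs q)$ in terms of $H(\bs q^{(-w)})$ and $H_{\mr c}(\bs q^{(-w)})$. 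The coefficients $1,-2,2$ in $\Gamma$ should be exactly what makes the logarithmic leftovers collapse to $S(\bs q)$. For concavity, if it is not already available, I would try writing $\Gamma$ as an average over $\pi$ of functions that are concave in $\bs q$ (the suffix-mass terms $q\log s$ are perspective-type expressions); I expect this to be the most delicate part.

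Given these, fix $v\in L$. For $u=v$ the observation $X_v$ determines $Z_v$, and $\Gamma$ vanishes at point masses (\cref{rem:zero-coordinates}), so $\mathsf I_\Gamma(Z_v;X_v)=\Gamma(\bs q)$. For $u\neq v$ I group the terms $\P(X_u=x)\,\Gamma(\mc L(Z_v\mid X_u=x))$ over all pairs $(u,x)$ according to whether $x\in N(v)$.
\begin{itemize}
\item For a fixed neighbour $w\in N(v)$: since $X$ is a bijection, $\sum_{u\neq v}\P(Z_v=\cdot,\,X_u=w)=\P(Z_v=\cdot,\,Z_v\neq w)$. This has total mass $1-q_w$ and normalised law $\bs q^{(-w)}$. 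Concavity of $\Gamma$ (Jensen over the mixture indexed by $u$) gives
\[
  \sum_{u\neq v}\P(X_u=w)\,\Gamma(\mc L(Z_v\mid X_u=w))\leq(1-q_w)\,\Gamma(\bs q^{(-w)}).
\]
\item For the pairs with $x\notin N(v)$: summing $\P(Z_v=\cdot,X_u=x)$ over all $u\neq v$ and all $x$ gives $(n-1)\bs q$. Subtracting the neighbour contributions $\sum_w(\bs q-q_we_w)=(d-1)\bs q$ leaves total mass $n-d$ with law $\bs q$. Jensen bounds their contribution by $(n-d)\Gamma(\bs q)$.
\end{itemize}
Adding everything,
\[
  \sum_{u\in L}\mathsf I_\Gamma(Z_v;X_u)\geq n\Gamma(\bs q)-(n-d)\Gamma(\bs q)-\sum_w(1-q_w)\Gamma(\bs q^{(-w)}),
\]
which is at least $S(\bs q)$ by the delete-one inequality.

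For $v\in R$ the argument is symmetric, with $Z_v=(X^{-1})_v$ taking values in $N(v)\subseteq L$. The observation $X_u$ for $u\in N(v)$ either equals $v$, which pins $Z_v=u$ to a point mass, or differs from $v$, which only rules out the value $u$. Now $u$ ranges over the possible values of $Z_v$, and the pairs $(u,x)$ are regrouped accordingly.
\begin{itemize}
\item For $u\in N(v)$ and $x=v$: the weight is $\P(X_u=v)=q_u$ and the conditional law is a point mass, so these pairs contribute nothing.
\item For $u\in N(v)$ and $x\neq v$: the pairs form a mixture of total mass $1-q_u$ whose normalised law is $\bs q^{(-u)}$.
\item For $u\notin N(v)$: these $n-d$ vertices give a mixture of total mass $n-d$ with law $\bs q$.
\end{itemize}
The same Jensen bounds and the delete-one inequality then give the same conclusion.
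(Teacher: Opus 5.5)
Granting your two ingredients, the regrouping argument is correct and is essentially the paper's proof. The paper differs in three ways.
\begin{itemize}
\item It phrases your Jensen step as a data-processing inequality for an auxiliary uniform index $K\neq v$, comparing the observation $(K,X_K)$ with $X_K$.
\item It pads $\bs p_v$ with zero coordinates and works in $\Delta_n$, instead of splitting off the $n-d$ non-neighbours.
\item For $v\in R$, it first rewrites $\sum_{u\in L}\mathsf I_\Gamma(Z_v;X_u)$ as $\sum_{y\in R}\mathsf I_\Gamma(Z_v;(X^{-1})_y)$ and then applies the same lemma to $X^{-1}$. Your direct grouping of the pairs $(u,x)$ by $u$ does this in one step.
\end{itemize}
Your plan for the deletion step also matches the paper, which proves it as an identity, $d\Gamma(\bs q)=S(\bs q)+\sum_a(1-q_a)\Gamma(\bs q^{\setminus a})$. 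One simplification: $H_{\mr c}$ cancels from $\Gamma$, which equals $H+2\E_\pi\sum_t q_{\pi(t)}\log s_t^\pi$. So you only need the recursion for the suffix term, obtained by conditioning on $\pi(1)$, together with $\sum_a(1-q_a)H(\bs q^{\setminus a})=(d-1)H(\bs q)-H_{\mr c}(\bs q)$. A separate deletion recursion for $H_{\mr c}$ would produce pairwise terms $(1-q_a-q_i)\log(1-q_a-q_i)$ that do not close up.

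The genuine gap is the concavity of $\Gamma$. Every one of your Jensen steps relies on it, and the route you suggest fails. The natural summand for a fixed ordering is
\[
  \Gamma_\pi(\bs q):=H(\bs q)+2\sum_t q_{\pi(t)}\log s_t^\pi(\bs q),
\]
equivalently twice the concave perspective expression $\sum_t q_{\pi(t)}\log(s_t^\pi/q_{\pi(t)})$ minus the concave function $H$. It is not concave. For $d=2$, $\bs q=(x,1-x)$ and $\pi$ the identity, it equals $-x\log x+(1-x)\log(1-x)$, whose second derivative $(2x-1)/(x(1-x))$ is positive for $x>1/2$. Only after averaging over both orderings does one get a concave function (indeed $\Gamma\equiv0$ on $\Delta_2$).

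The missing idea is to derive concavity from the deletion identity itself, by induction on $d$. With $\bs q_{-a}:=(q_i)_{i\neq a}$, the identity reads
\[
  \Gamma_d(\bs q)=\frac1d\,S(\bs q)+\frac1d\sum_{a=1}^d(1-q_a)\,\Gamma_{d-1}\Bigl(\frac{\bs q_{-a}}{1-q_a}\Bigr),
\]
with a summand read as $0$ when $q_a=1$. Each summand is the perspective of the inductively concave $\Gamma_{d-1}$, composed with the affine map $\bs q\mapsto(1-q_a,\bs q_{-a})$, hence concave. Also $S$ is concave by Vontobel's theorem (\cref{lem:b-properties}). Once this lemma is supplied, your argument goes through.
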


\begin{proof}[Proof of \cref{prop:good-vertex}]
By \cref{eq:Phi-decomposition} and the definition of information
gain,
\begin{align*}
  \sum_{u\in L}\on{gap}(u)
  &=
    \sum_{u\in L}
    \left(
      \Phi_0(\bs{p})
      -\E_{X_u}\Phi_0(\bs{p}^{X_u})
      -H(X_u)
    \right) + \frac1g\sum_{u\in L}\sum_{v\in V}
      \mathsf I_\Gamma(Z_v;X_u)\\
  &\geq
    -\frac2g\sum_{u\in L}S(\bs{p}_u)
    +\frac1g\sum_{u\in L}\sum_{v\in V}
      \mathsf I_\Gamma(Z_v;X_u)
    \\
  &\geq0;
\end{align*}
here, the first inequality uses \cref{prop:girth-estimate} and the second uses \cref{prop:averaging-estimate}.
\end{proof}

\section{The girth estimate}
\label{sec:girth}

We now prove \cref{prop:girth-estimate}. Fix an arbitrary vertex
$\star\in L$. We first convert the information revealed by
$X_\star$ into a flow on an auxiliary network. We then decompose this
flow and use the girth assumption on the resulting nonbacktracking
walks.

\subsection{Constructing the flow}
\label{sec:constructing-flow}

For each edge $e\in E$, let
\[
  E_e:=\mbf{1}_{\{e\in M\}}.
\]
The mutual information $I(X_\star;E_e)$ naturally splits according to
the two possible values of $E_e$.

\begin{definition}
\label{def:edge-information}
For every $e\in E$, define
\begin{align*}
  \mathsf J_e^+
  &:=
  p_e\KL\bigl(
    \mc{L}(X_\star\mid E_e=1)
    \,\big\|\,
    \mc{L}(X_\star)
  \bigr),\\
  \mathsf J_e^-
  &:=
  (1-p_e)\KL\bigl(
    \mc{L}(X_\star\mid E_e=0)
    \,\big\|\,
    \mc{L}(X_\star)
  \bigr).
\end{align*}
We set $\mathsf J_e^+=0$ when $p_e=0$ and
$\mathsf J_e^-=0$ when $p_e=1$; the formulas below are interpreted
using the same convention.
Thus, recalling \eqref{eq:mutual-information}, we have
\[
  I(X_\star;E_e)=\mathsf J_e^++\mathsf J_e^-.
\]
For every $v\in V$, define
\[
  \mathsf J_v^+
  :=\sum_{e\ni v}\mathsf J_e^+,
  \qquad
  \mathsf J_v^-
  :=\sum_{e\ni v}\mathsf J_e^-.
\]
\end{definition}

For brevity, write
\[
  p_e^{X_\star}:=\P(e\in M\mid X_\star).
\]
Bayes' rule gives
\begin{align*}
  \mathsf J_e^+
  &=
  \E_{X_\star}\left[
    p_e^{X_\star}\log\frac{p_e^{X_\star}}{p_e}
  \right],
  &
  \mathsf J_e^-
  &=
  \E_{X_\star}\left[
    (1-p_e^{X_\star})
    \log\frac{1-p_e^{X_\star}}{1-p_e}
  \right].
\end{align*}
Summing over the edges incident to $v$ gives
\begin{equation}
\label{eq:Jv-information}
\begin{aligned}
  \mathsf J_v^+
  &=
  H(\bs{p}_v)
  -\E_{X_\star}H(\bs{p}_v^{X_\star}),\\
  \mathsf J_v^-
  &=
  H_{\mr c}(\bs{p}_v)
  -\E_{X_\star}H_{\mr c}(\bs{p}_v^{X_\star}),\\
  \mathsf J_v^+-\mathsf J_v^-
  &=
  S(\bs{p}_v)
  -\E_{X_\star}S(\bs{p}_v^{X_\star}).
\end{aligned}
\end{equation}

We first record two inequalities satisfied by these quantities.

\begin{lemma}
\label{lem:J-bound}
For every $v\in V$ and every edge $e\ni v$,
\[
  \mathsf J_v^-\leq\mathsf J_v^+,
  \qquad
  \mathsf J_e^-
  \leq
  \sum_{\substack{f\ni v\\f\ne e}}\mathsf J_f^+.
\]
\end{lemma}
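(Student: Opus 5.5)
The plan is to prove the second inequality first and then obtain the first one from it, together with a direct convexity argument.

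For the second inequality, fix $v$ and $e\ni v$. By the perfect matching constraint, the event $\{E_e=0\}$ is the disjoint union of the events $\{E_f=1\}$ over the edges $f\ni v$ with $f\ne e$. Hence $\mc L(X_\star\mid E_e=0)$ is a mixture of the laws $\mc L(X_\star\mid E_f=1)$ with weights $p_f/(1-p_e)$; these weights sum to $1$ because $\sum_{f\ni v}p_f=1$. Relative entropy is convex in its first argument, so
\[
(1-p_e)\KL\bigl(\mc L(X_\star\mid E_e=0)\,\big\|\,\mc L(X_\star)\bigr)\le\sum_{f\ne e}p_f\KL\bigl(\mc L(X_\star\mid E_f=1)\,\big\|\,\mc L(X_\star)\bigr).
\]
The left side is $\mathsf J_e^-$ and the right side is $\sum_{f\ne e}\mathsf J_f^+$, which is the claim. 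The degenerate cases are handled by the stated conventions. If $p_e=1$, then $\mathsf J_e^-=0$. If some $p_f=0$, that $f$ simply drops out of the mixture.

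For the first inequality, summing the second over $e\ni v$ only gives $\mathsf J_v^-\le(d_v-1)\mathsf J_v^+$, which is too weak. Instead I would use the identity \cref{eq:Jv-information}: $\mathsf J_v^+-\mathsf J_v^-=S(\bs p_v)-\E_{X_\star}S(\bs p_v^{X_\star})$. Since $\E_{X_\star}\bs p_v^{X_\star}=\bs p_v$ and $S$ is concave by \cref{lem:b-properties}, Jensen's inequality makes the right-hand side nonnegative. In the language of \cref{lem:data-processing}, this is the statement $\mathsf I_S(Z_v;X_\star)\ge0$.

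The one point that needs care is that $\bs p_v^{X_\star}$ stays in $\Delta_{d_v}$. This holds because the matching constraint also holds conditionally. Beyond that, I expect no real obstacle. The main subtlety is the zero-probability conventions in the mixture step, where one checks that edges with $p_f=0$ contribute nothing to either side.
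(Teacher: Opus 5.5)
Your proposal is correct and follows the paper's proof. The first inequality comes from \cref{eq:Jv-information}, $\bs p_v=\E_{X_\star}\bs p_v^{X_\star}$, and concavity of $S$. The second comes from writing $\mc L(X_\star\mid E_e=0)$ as the mixture of the laws $\mc L(X_\star\mid E_f=1)$, $f\ne e$, with weights $p_f/(1-p_e)$, and applying convexity of KL divergence in its first argument.
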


\begin{proof}
By \cref{eq:Jv-information},
\[
  \mathsf J_v^+-\mathsf J_v^-
  =
  S(\bs{p}_v)
  -\E_{X_\star}S(\bs{p}_v^{X_\star})
  \geq0,
\]
where the inequality follows from the concavity of $S$ and the
identity
\[
  \bs{p}_v=\E_{X_\star}\bs{p}_v^{X_\star};
\]
see \cref{lem:b-properties}.

For the second inequality, there is nothing to prove if $p_e=1$.
Suppose that $p_e<1$. Since exactly one edge incident to $v$ belongs
to $M$,
\[
  \mc{L}(X_\star\mid E_e=0)
  =
  \sum_{\substack{f\ni v\\f\ne e}}
  \frac{p_f}{1-p_e}\,
  \mc{L}(X_\star\mid E_f=1).
\]
Convexity of KL divergence in its first argument
\cite[Theorem~2.7.2]{CoverThomas} therefore gives
\[
  \mathsf J_e^-
  =
  (1-p_e)
  \KL\bigl(
    \mc{L}(X_\star\mid E_e=0)
    \,\big\|\,
    \mc{L}(X_\star)
  \bigr)
  \leq
  \sum_{\substack{f\ni v\\f\ne e}}
  p_f
  \KL\bigl(
    \mc{L}(X_\star\mid E_f=1)
    \,\big\|\,
    \mc{L}(X_\star)
  \bigr)
  =
  \sum_{\substack{f\ni v\\f\ne e}}\mathsf J_f^+.
  \qedhere
\]
\end{proof}

The flow construction and the girth argument depend on the quantities
$\mathsf J_e^\pm$ only through the inequalities in
\cref{lem:J-bound}. Accordingly, the remainder of the construction
uses only that the nonnegative weights $\mathsf J_e^\pm$ satisfy the
inequalities in \cref{lem:J-bound}.

Before constructing the auxiliary flow, we decompose these weights
locally. Fix $v\in V$. We seek nonnegative coefficients
$\alpha^v_{ef}$, indexed by distinct edges $e,f\ni v$, such that
\begin{equation}
\label{eq:local-decomposition}
\sum_{\substack{f\ni v\\f\ne e}}\alpha^v_{ef}
=\mathsf J_e^-,
\qquad
\sum_{\substack{e\ni v\\e\ne f}}\alpha^v_{ef}
\leq\mathsf J_f^+.
\end{equation}
Thus, the first identity distributes the entire weight
$\mathsf J_e^-$ among the other edges incident to $v$, while the
second inequality says that the total amount assigned to an edge $f$
is at most $\mathsf J_f^+$.

\begin{lemma}
\label{lem:local-decomposition}
For every $v\in V$, there exist nonnegative coefficients
$\alpha^v_{ef}$ satisfying \cref{eq:local-decomposition}.
\end{lemma}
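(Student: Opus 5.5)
The plan is to treat this as a transportation (bipartite supply--demand) problem and to verify its feasibility with a max-flow/min-cut, i.e.\ Hall-type, argument.

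Fix $v$ and let $D$ be the set of edges incident to $v$. Build a network with a source $s$, a sink $t$, a copy $e^{\mathrm{out}}$ of each $e\in D$, and a copy $f^{\mathrm{in}}$ of each $f\in D$. The arcs are:
\begin{itemize}
\item $s\to e^{\mathrm{out}}$ with capacity $\mathsf J_e^-$;
\item $f^{\mathrm{in}}\to t$ with capacity $\mathsf J_f^+$;
\item $e^{\mathrm{out}}\to f^{\mathrm{in}}$ with infinite capacity, for every ordered pair $e\neq f$.
\end{itemize}
A flow saturating every source arc gives $\alpha^v_{ef}:=$ the flow on $e^{\mathrm{out}}\to f^{\mathrm{in}}$. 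These coefficients are nonnegative and satisfy \cref{eq:local-decomposition}: the first identity is flow conservation at $e^{\mathrm{out}}$ together with saturation, and the second inequality is conservation at $f^{\mathrm{in}}$ together with the sink capacity. So by max-flow/min-cut it suffices to show that every finite $s$--$t$ cut has capacity at least $\mathsf J_v^-=\sum_{e\in D}\mathsf J_e^-$.

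A finite cut is determined by the set $T\subseteq D$ of indices $e$ whose $e^{\mathrm{out}}$ lies on the source side. Since the middle arcs have infinite capacity, every $f^{\mathrm{in}}$ with $f\in N(T):=\{f\in D: f\neq e \text{ for some } e\in T\}$ must also lie on the source side. Hence the cut capacity is at least
\[
\sum_{e\in D\setminus T}\mathsf J_e^- \;+\; \sum_{f\in N(T)}\mathsf J_f^+ .
\]
We need this to be at least $\sum_{e\in D}\mathsf J_e^-$, which amounts to checking $\sum_{e\in T}\mathsf J_e^-\le\sum_{f\in N(T)}\mathsf J_f^+$ in three cases.
\begin{itemize}
\item If $T=\emptyset$, the inequality is trivial.
\item If $T=\{e\}$, then $N(T)=D\setminus\{e\}$, and the required inequality $\mathsf J_e^-\le\sum_{f\ni v,\,f\ne e}\mathsf J_f^+$ is exactly the second inequality of \cref{lem:J-bound}.
\item If $|T|\ge2$, then $N(T)=D$, because any $f$ differs from at least one element of $T$. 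The required inequality then follows from $\sum_{e\in T}\mathsf J_e^-\le\mathsf J_v^-\le\mathsf J_v^+$, using nonnegativity and the first inequality of \cref{lem:J-bound}.
\end{itemize}
Thus the maximum flow equals $\mathsf J_v^-$, and the coefficients exist.

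The degenerate case $d_v=1$ is consistent with this. There are then no pairs $e\neq f$, and \cref{lem:J-bound} forces $\mathsf J_e^-\le 0$, so $\mathsf J_e^-=0$ and the empty family works.

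There is no real obstacle here. The only point needing care is the observation that for $|T|\ge2$ the neighbourhood $N(T)$ is all of $D$. This is what lets the global inequality $\mathsf J_v^-\le\mathsf J_v^+$ handle every set of size at least two, leaving only the singleton constraints, which are supplied separately by \cref{lem:J-bound}.
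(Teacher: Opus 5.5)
Your proposal is correct and is essentially the paper's proof. It uses the same source/sink transportation network and the same max-flow--min-cut (Hall-type) check: singletons are handled by the second inequality of \cref{lem:J-bound}, and sets of size at least two by $\mathsf J_v^-\le\mathsf J_v^+$, since then $N(T)$ is all of $D$. The only difference is cosmetic: you give the middle arcs infinite capacity, whereas the paper gives them capacity $\mathsf J_v^-$ and disposes of any cut containing such an arc directly.
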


\begin{proof}
Fix $v\in V$. Form a directed network with source $\mathbf s_v$,
sink $\mathbf t_v$, and two vertices $e^-$ and $e^+$ for each edge
$e\ni v$. Add the arcs
\[
\mathbf s_v\longrightarrow e^-
\quad\text{with capacity }\mathsf J_e^-,
\qquad
f^+\longrightarrow\mathbf t_v
\quad\text{with capacity }\mathsf J_f^+,
\]
and, whenever $e\ne f$, add an arc
\[
e^-\longrightarrow f^+ \quad \text{with capacity } \mathsf J_v^-.
\]
We claim that this network admits a flow of value $\mathsf J_v^-$.
By the max-flow--min-cut theorem, it suffices to verify that every
cut has capacity at least $\mathsf J_v^-$. Any cut containing an arc $e^-\to f^+$ already has this capacity. For any remaining cut, let $A$ be the set of vertices of the form
$e^-$ on the source side, and let $N(A)$ denote their neighbors among
the vertices of the form $f^+$. Since every vertex in $N(A)$ must
also lie on the source side, it is enough to show that
\[
\sum_{e^-\in A}\mathsf J_e^-
\leq
\sum_{f^+\in N(A)}\mathsf J_f^+.
\]
If $A=\{e^-\}$, this is the second inequality in
\cref{lem:J-bound}. If $|A|\geq2$, then $N(A)$ contains every vertex
$f^+$, and the required inequality follows from
\[
\sum_{e^-\in A}\mathsf J_e^-
\leq\mathsf J_v^-
\leq\mathsf J_v^+
=
\sum_{f\ni v}\mathsf J_f^+.
\]
The source cut has capacity
\[
\sum_{e\ni v}\mathsf J_e^-=\mathsf J_v^-.
\]
Thus the minimum cut, and hence the maximum flow, has value
$\mathsf J_v^-$.

Let $\alpha^v_{ef}$ be the flow on the arc
$e^-\to f^+$. Since every arc leaving $\mathbf s_v$ is saturated,
\[
\sum_{\substack{f\ni v\\f\ne e}}\alpha^v_{ef}
=\mathsf J_e^-.
\]
The capacities of the arcs entering $\mathbf t_v$ give
\[
\sum_{\substack{e\ni v\\e\ne f}}\alpha^v_{ef}
\leq\mathsf J_f^+.
\]
Hence \cref{eq:local-decomposition} holds.
\end{proof}

Choose coefficients as in \cref{lem:local-decomposition} at every
vertex. We now use them to construct the auxiliary directed flow.
For an edge $e\in E$, write $e_L\in L$ and $e_R\in R$ for its two
endpoints. For every incidence $v\in e$, define
\[
\delta_{v,e}
:=
\mathsf J_e^+
-\sum_{\substack{f\ni v\\f\ne e}}\alpha^v_{fe}.
\]
By \cref{eq:local-decomposition}, $\delta_{v,e}\geq0$. Moreover,
\begin{equation}
\label{eq:vertex-imbalance}
\sum_{e\ni v}\delta_{v,e}
=
\mathsf J_v^+-\mathsf J_v^-.
\end{equation}

\begin{definition}
\label{def:auxiliary-network}
The auxiliary directed network $\mc{N}$ has a source $\mathbf s$, a
sink $\mathbf t$, and a node $(v,e,\sigma)$ for every incidence
$v\in e$ and every $\sigma\in\{+,-\}$. Its arcs are of the following
three types.

\begin{itemize}
\item For every $e\in E$, the {traversal arcs} are
\[
(e_L,e,+)\longrightarrow(e_R,e,+)
\qquad\text{and}\qquad
(e_R,e,-)\longrightarrow(e_L,e,-).
\]

\item For every pair of distinct edges $e,f\ni v$, the {transfer arcs} are
\[
(v,e,-)\longrightarrow(v,f,+),
\qquad v\in L,
\]
and
\[
(v,f,+)\longrightarrow(v,e,-),
\qquad v\in R.
\]

\item The {source and sink arcs} are
\[
\mathbf s\longrightarrow(v,e,+),
\qquad v\in L,
\]
and
\[
(v,e,+)\longrightarrow\mathbf t,
\qquad v\in R.
\]
\end{itemize}
\end{definition}

Away from the source and sink, every directed route follows the same
alternating pattern: it traverses a $+$ arc from $L$ to $R$, transfers
to a $-$ node, traverses a $-$ arc from $R$ to $L$, and then transfers
back to a $+$ node.

We now assign flow to the arcs of $\mc{N}$. The traversal arc
associated with $e$ and directed from $L$ to $R$ carries
$\mathsf J_e^+$, while the traversal arc directed from $R$ to $L$
carries $\mathsf J_e^-$. Each transfer arc indexed by
$(v,e,f)$ carries $\alpha^v_{ef}$. Finally, every source or sink arc
incident to $(v,e,+)$ carries $\delta_{v,e}$. Denote the resulting
nonnegative function on the arcs of $\mc{N}$ by $\varphi$. A typical internal segment of a directed route is shown in
\cref{fig:flow-motif}.

\begin{figure}[t]
\centering
\begin{tikzpicture}[
  state/.style={
    draw,
    rounded corners=1.5pt,
    inner xsep=4pt,
    inner ysep=3pt,
    font=\small
  },
  traversal/.style={
    -{Latex[length=2mm]},
    semithick,
    blue
  },
  transfer/.style={
    -{Latex[length=2mm]},
    semithick,
    dashed,
    orange!85!black
  },
  lab/.style={
    font=\scriptsize,
    align=center
  }
]
\node[state] (a) at (0,0) {$(e_L,e,+)$};
\node[state] (b) at (3.2,0) {$(e_R,e,+)$};
\node[state] (c) at (6.4,0) {$(e_R,f,-)$};
\node[state] (d) at (9.6,0) {$(f_L,f,-)$};
\node[state] (q) at (12.8,0) {$(f_L,h,+)$};

\draw[traversal] (a) -- node[lab,above]
  {traverse $e$\\flow $\mathsf J_e^+$} (b);
\draw[transfer] (b) -- node[lab,above]
  {transfer at $e_R$\\flow $\alpha^{e_R}_{fe}$} (c);
\draw[traversal] (c) -- node[lab,above]
  {traverse $f$\\flow $\mathsf J_f^-$} (d);
\draw[transfer] (d) -- node[lab,above]
  {transfer at $f_L$\\flow $\alpha^{f_L}_{fh}$} (q);
\end{tikzpicture}
\caption{A segment of the auxiliary network $\mc{N}$, with the value
of the flow $\varphi$ indicated on each arc. Solid traversal arcs move
along edges of $G$, while dashed transfer arcs switch to a distinct
incident edge at the current vertex.}
\label{fig:flow-motif}
\end{figure}

\begin{lemma}
\label{lem:auxiliary-flow}
The function $\varphi$ is an $\mathbf s$--$\mathbf t$ flow on
$\mc{N}$. Its value is
\begin{equation}
\label{eq:total-path-flow}
\operatorname{val}(\varphi)
=
\sum_{e\in E}(\mathsf J_e^+-\mathsf J_e^-)
=
\frac12\sum_{v\in V}
  (\mathsf J_v^+-\mathsf J_v^-).
\end{equation}
\end{lemma}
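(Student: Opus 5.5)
We verify flow conservation at every internal node of $\mc{N}$, i.e., at each node $(v,e,\sigma)$. Nonnegativity of $\varphi$ is already known: $\mathsf J_e^\pm\geq0$, $\alpha^v_{ef}\geq0$, and $\delta_{v,e}\geq0$ by \cref{eq:local-decomposition}. We then compute the value as the net flow out of $\mathbf s$ and rewrite it using \cref{eq:vertex-imbalance}. There are four kinds of nodes, according to $v\in L$ or $v\in R$ and $\sigma=\pm$. In each case we list the incoming and outgoing arcs from \cref{def:auxiliary-network} and compare the totals with \cref{eq:local-decomposition}.

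For $v\in L$ and the node $(v,e,+)$, the inflow is the source arc $\delta_{v,e}$ plus the transfer arcs $(v,f,-)\to(v,e,+)$ for $f\ne e$, which carry $\sum_{f\ne e}\alpha^v_{fe}$. The outflow is the traversal arc carrying $\mathsf J_e^+$. By the definition of $\delta_{v,e}$ these agree. For $v\in L$ and the node $(v,e,-)$, the inflow is the traversal arc from $(e_R,e,-)$ carrying $\mathsf J_e^-$. The outflow is the transfer arcs to $(v,f,+)$, carrying $\sum_{f\ne e}\alpha^v_{ef}=\mathsf J_e^-$ by the first identity in \cref{eq:local-decomposition}. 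For $v\in R$ and the node $(v,e,+)$, the inflow is the traversal arc carrying $\mathsf J_e^+$. The outflow is the transfer arcs $(v,e,+)\to(v,f,-)$, which carry $\alpha^v_{fe}$ (the arc for the pair $(f,e)$), together with the sink arc $\delta_{v,e}$. Their total is $\sum_{f\ne e}\alpha^v_{fe}+\delta_{v,e}=\mathsf J_e^+$. For $v\in R$ and the node $(v,e,-)$, the inflow is the transfer arcs from $(v,f,+)$, carrying $\sum_{f\ne e}\alpha^v_{ef}=\mathsf J_e^-$. The outflow is the traversal arc carrying $\mathsf J_e^-$. The one thing to be careful about is the indexing convention for transfer arcs at $R$. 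The arc $(v,f,+)\to(v,e,-)$ is indexed by $(v,e,f)$ and therefore carries $\alpha^v_{ef}$. With that convention all four balances hold.

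For the value, the net outflow of $\mathbf s$ is $\sum_{v\in L}\sum_{e\ni v}\delta_{v,e}=\sum_{v\in L}(\mathsf J_v^+-\mathsf J_v^-)$ by \cref{eq:vertex-imbalance}. Each edge has exactly one endpoint in $L$, so this equals $\sum_{e\in E}(\mathsf J_e^+-\mathsf J_e^-)$. The same computation at the sink gives $\sum_{v\in R}(\mathsf J_v^+-\mathsf J_v^-)=\sum_{e\in E}(\mathsf J_e^+-\mathsf J_e^-)$, which confirms that the inflow to $\mathbf t$ matches the outflow from $\mathbf s$. Averaging the $L$ and $R$ expressions gives $\frac12\sum_{v\in V}(\mathsf J_v^+-\mathsf J_v^-)$, which is \cref{eq:total-path-flow}.

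The only real obstacle is bookkeeping: matching the orientation and indexing of the transfer arcs to the two sums in \cref{eq:local-decomposition}. Everything else is direct substitution.
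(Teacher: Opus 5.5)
Your proposal is correct and follows the paper's proof: conservation at $-$ nodes comes from the first identity in \cref{eq:local-decomposition}, and conservation at $+$ nodes comes from the definition of $\delta_{v,e}$. The value is then computed via \cref{eq:vertex-imbalance} together with the fact that each edge has one endpoint in each class. The only difference is that you split the paper's two node types into four according to whether $v\in L$ or $v\in R$; this makes the orientation and indexing of the transfer arcs explicit but adds no new ingredient.
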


\begin{proof}
At a $-$ node $(v,e,-)$, the traversal arc carries
$\mathsf J_e^-$, while the incident transfer arcs carry a total flow
\[
\sum_{\substack{f\ni v\\f\ne e}}\alpha^v_{ef}
=
\mathsf J_e^-
\]
by \cref{eq:local-decomposition}. Thus flow is conserved at every
$-$ node.

At a $+$ node $(v,e,+)$, the incident transfer arcs carry a total
flow
\[
\sum_{\substack{f\ni v\\f\ne e}}\alpha^v_{fe},
\]
and the incident source or sink arc carries $\delta_{v,e}$. Their sum
is $\mathsf J_e^+$ by the definition of $\delta_{v,e}$, which is the
flow on the traversal arc. Thus flow is also conserved at every
$+$ node.

Finally, \cref{eq:vertex-imbalance} gives
\begin{align*}
\sum_{v\in L}\sum_{e\ni v}\delta_{v,e}
&=
\sum_{v\in L}(\mathsf J_v^+-\mathsf J_v^-)
=
\sum_{e\in E}(\mathsf J_e^+-\mathsf J_e^-) =
\sum_{v\in R}(\mathsf J_v^+-\mathsf J_v^-)
=
\sum_{v\in R}\sum_{e\ni v}\delta_{v,e}.
\end{align*}
Hence the total flow leaving $\mathbf s$ equals the total flow
entering $\mathbf t$, and their common value is
\[
\sum_{e\in E}(\mathsf J_e^+-\mathsf J_e^-).
\]
Since each edge has one endpoint in each vertex class,
\[
\sum_{v\in V}(\mathsf J_v^+-\mathsf J_v^-)
=
2\sum_{e\in E}(\mathsf J_e^+-\mathsf J_e^-),
\]
which proves \cref{eq:total-path-flow}.
\end{proof}

\subsection{Using the girth}

We now decompose the auxiliary flow into paths and cycles. Projecting
these components onto $G$ produces nonbacktracking walks, to which we
can apply the girth assumption. Recall that $\star \in L$ is fixed, but otherwise arbitrary.

\begin{proposition}
\label{prop:girth-numerical}
We have
\[
g\mathsf J_\star^+
\leq
\sum_{v\in V}\mathsf J_v^-
+\frac{g-2}{2}
 \sum_{v\in V}(\mathsf J_v^+-\mathsf J_v^-)
+2(\mathsf J_\star^+-\mathsf J_\star^-).
\]
\end{proposition}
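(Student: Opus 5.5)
The plan is to decompose the auxiliary flow $\varphi$ of \cref{lem:auxiliary-flow} into weighted $\mathbf s$--$\mathbf t$ paths and weighted cycles. I then rewrite each of the three terms on the right-hand side, and also $\mathsf J_\star^+$, as sums over these components. This reduces the inequality to a per-component length estimate, and the girth hypothesis is used only there.

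\textbf{Step 1: flow decomposition and bookkeeping.} By the standard flow decomposition theorem, $\varphi=\sum_i w_i\,\mathbf 1_{P_i}+\sum_j w'_j\,\mathbf 1_{C_j}$, where the $P_i$ are $\mathbf s$--$\mathbf t$ paths and the $C_j$ are directed cycles in $\mc N$, with positive weights. By the alternating structure of $\mc N$, each component alternates $+$ traversals ($L\to R$) with $-$ traversals ($R\to L$), joined by transfer arcs.
\begin{itemize}
\item A path $P_i$ enters at some $(v,e,+)$ with $v\in L$ and exits at an $R$-vertex. So it has $k_i$ $+$ traversals and $k_i-1$ $-$ traversals, and its projection is a walk in $G$ of length $\ell_i=2k_i-1$.
\item A cycle $C_j$ has $k_j$ traversals of each sign, so its projection is a closed walk of length $2k_j$.
\end{itemize}
Since transfer arcs always switch to a \emph{distinct} edge at the current vertex, every projection is a nonbacktracking walk, cyclically so for the $C_j$.

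Summing the flow on the relevant arcs then gives the following identities.
\begin{itemize}
\item $\sum_v\mathsf J_v^-=2\sum_e\mathsf J_e^-$ equals twice the total weight carried on $-$ traversals, i.e.\ $\sum_i 2(k_i-1)w_i+\sum_j 2k_jw'_j$.
\item $\sum_v(\mathsf J_v^+-\mathsf J_v^-)=2\operatorname{val}(\varphi)=2\sum_i w_i$, by \cref{eq:total-path-flow}.
\item $\mathsf J_\star^+-\mathsf J_\star^-=\sum_{e\ni\star}\delta_{\star,e}$ by \cref{eq:vertex-imbalance}. This is the total weight of paths whose first node lies at $\star$.
\item $\mathsf J_\star^+$ is the flow on $+$ traversal arcs leaving $\star$. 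Hence it equals $\sum_i m_iw_i+\sum_j m_jw'_j$, where $m$ counts the departures of a component from $\star$.
\end{itemize}

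\textbf{Step 2: the girth input.} The key fact is that a nonbacktracking walk in $G$ that starts and ends at the same vertex has length at least $g$. Indeed, if it had length less than $g$, it would stay inside a ball of radius less than $g/2$, which is a tree, and nonbacktracking walks in a tree never return to their starting vertex. Between two consecutive departures from $\star$, a component performs such a closed nonbacktracking walk from $\star$ back to $\star$. Thus:
\begin{itemize}
\item a cycle with $m$ departures from $\star$ has $2k\geq mg$;
\item a path with $m\geq1$ departures that starts at $\star$ has $\ell\geq(m-1)g+1$;
\item a path with $m\geq1$ departures that does not start at $\star$ has $\ell\geq 2+(m-1)g+1$. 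Reaching $\star\in L$ from a different $L$-vertex takes at least $2$ edges, and after the last departure at least one edge remains, since the path ends in $R$.
\end{itemize}

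\textbf{Step 3: componentwise comparison.} By Steps 1 and 2, it suffices to check, for each component, that its contribution to the right-hand side is at least $g m$ times its weight.
\begin{itemize}
\item For a cycle, the contribution is $2k\,w'\geq mg\,w'$.
\item For a path starting at $\star$, the contribution is $\bigl(2(k-1)+(g-2)+2\bigr)w=(\ell-1+g)w\geq mg\,w$.
\item For a path not starting at $\star$ with $m\geq1$, the contribution is $(\ell-1+g-2)w\geq\bigl((m-1)g+2+g-2\bigr)w=mg\,w$.
\item The case $m=0$ is trivial, since every contribution is nonnegative.
\end{itemize}
Summing over components yields the proposition.

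The main obstacle I expect is Step 2. Specifically, I need to justify carefully that the projected walks are genuinely nonbacktracking, including at the junctions where a component passes through $\star$ and at the wrap-around point of a cycle. I also need the precise counting of the extra edges before the first departure and after the last one, because this is exactly what produces the boundary term $2(\mathsf J_\star^+-\mathsf J_\star^-)$ with the right constant.
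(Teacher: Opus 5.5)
Your proposal is correct and follows essentially the same route as the paper: decompose $\varphi$ into paths and cycles, project to nonbacktracking walks, bound each component using the closed excursions at $\star$ between consecutive departures (plus the extra return step when a path does not start at $\star$), and reassemble via the flow identities. The only differences are cosmetic: you count total walk length rather than the number of $-$ traversals, which is equivalent by alternation. Your wrap-around worry is moot, since each excursion only needs consecutive edges to be distinct in order to contain a cycle.
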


\begin{proof}
Let $\varphi$ be the $\mathbf s$--$\mathbf t$ flow on
$\mc{N}$ constructed in \cref{sec:constructing-flow} from the weights
$\mathsf J_e^\pm$ associated with the fixed vertex $\star$. By the
flow decomposition theorem
\cite[Theorem~3.5]{AhujaMagnantiOrlin}, there is a finite collection
$\mc{C}$ of directed $\mathbf s$--$\mathbf t$ paths and directed
cycles in $\mc{N}$, together with weights $\lambda_C\geq0$, such that
for every arc $a$ of $\mc{N}$,
\[
\varphi(a)
=
\sum_{\substack{C\in\mc{C}\\a\in C}}\lambda_C.
\]

Project $C$ onto $G$ by listing its traversal arcs in their directed
order, replacing each by its underlying edge of $G$, and discarding
the transfer, source, and sink arcs. Denote the resulting walk by
$\pi(C)$. Successive edges of $\pi(C)$ are distinct because every
transfer changes the edge index. Thus $\pi(C)$ is nonbacktracking. A
path component projects to a walk from $L$ to $R$, while a cycle
component projects to a nonempty closed walk. Neither walk needs to be
simple.

Every nonempty closed nonbacktracking walk in $G$ has length at least
$g$, since such a walk contains a simple cycle.

\smallskip
\paragraph{\bf Component bounds.}
For each component $C$, let $k(C)$ be the number of $+$ traversal
arcs leaving $\star$, and let $m(C)$ be the total number of $-$
traversal arcs. If $C$ is a path, let $\varepsilon(C)=1$ when its
source arc enters a node over $\star$, and let $\varepsilon(C)=0$
otherwise. We claim that
\begin{align*}
gk(C)
&\leq 2m(C),
&&\text{if $C$ is a cycle},\\
gk(C)
&\leq 2m(C)+(g-2)+2\varepsilon(C),
&&\text{if $C$ is a path}.
\end{align*}

If $k(C)=0$, both inequalities are immediate, so assume that
$k(C)>0$.

Suppose first that $C$ is a cycle in $\mc{N}$. List the $+$
traversal arcs leaving $\star$ as
$a_1,\ldots,a_{k(C)}$ in their cyclic order along $C$. For each $i$,
with indices taken modulo $k(C)$, consider the portion of $C$ that
starts with $a_i$ and ends with the last traversal arc before
$a_{i+1}$. The first traversal in this portion leaves $\star$, while
the last is a $-$ traversal returning to $\star$. Its projection is
therefore a closed nonbacktracking walk in $G$ based at $\star$. These $k(C)$ portions partition the traversal arcs of $C$. Each
projected walk has length at least $g$ and, since the traversal signs
alternate, contains at least $g/2$ traversals of sign $-$. Therefore,
\[
m(C)\geq\frac g2k(C).
\]

Suppose now that $C$ is an $\mathbf s$--$\mathbf t$ path. List the
$+$ traversal arcs leaving $\star$ as
$a_1,\ldots,a_{k(C)}$ in their order along $C$. For each
$i<k(C)$, the portion beginning with $a_i$ and ending with the last
traversal arc before $a_{i+1}$ projects to a closed nonbacktracking
walk in $G$ based at $\star$. These $k(C)-1$ portions therefore
contain at least
\[
\frac g2\bigl(k(C)-1\bigr)
\]
traversals of sign $-$. If $\varepsilon(C)=0$, the path does not begin over $\star$ and must
enter $\star$ along a $-$ traversal before $a_1$. This gives one
additional $-$ traversal outside the portions above. Hence
\[
m(C)
\geq
\frac g2\bigl(k(C)-1\bigr)+1-\varepsilon(C),
\]
which is equivalent to the claimed path inequality.

\smallskip
\paragraph{\bf Reassembling the flow.}
Multiplying the component bounds by $\lambda_C$ and summing over the flow
decomposition gives
\begin{align}
g\sum_C\lambda_Ck(C)
\leq{}&
2\sum_C\lambda_Cm(C)
+(g-2)\sum_{\substack{C\text{ a path}}}\lambda_C
+2\sum_{\substack{C\text{ a path}}}
  \lambda_C\varepsilon(C).
\label{eq:summed-inequality}
\end{align}
We now identify these four quantities in terms of $\varphi$.

The term on the left is the total flow on the $+$ traversal arcs
leaving $\star$, and hence
\[
\sum_C\lambda_Ck(C)=\sum_{e\ni \star}\mathsf J_e^+=\mathsf J_\star^+.
\]
Similarly, $\sum_C\lambda_Cm(C)$ is the total flow on all $-$
traversal arcs. Therefore,
\[
\sum_C\lambda_Cm(C)
=
\sum_{e\in E}\mathsf J_e^-
=
\frac12\sum_{v\in V}\mathsf J_v^-.
\]

The total weight of the path components is the value  of the flow, so
\cref{eq:total-path-flow} gives
\[
\sum_{\substack{C\text{ a path}}}\lambda_C
=\text{val}(\varphi)=
\frac12\sum_{v\in V}
  (\mathsf J_v^+-\mathsf J_v^-).
\]
Finally, $\varepsilon(C)=1$ precisely when the source arc of $C$
enters a node over $\star$. Hence, by
\cref{eq:vertex-imbalance},
\[
\sum_{\substack{C\text{ a path}}}
  \lambda_C\varepsilon(C)
=
\sum_{e\ni\star}\delta_{\star,e}
=
\mathsf J_\star^+-\mathsf J_\star^-.
\]

Substituting these identities into \cref{eq:summed-inequality}
gives
\[
g\mathsf J_\star^+
\leq
\sum_{v\in V}\mathsf J_v^-
+\frac{g-2}{2}
 \sum_{v\in V}(\mathsf J_v^+-\mathsf J_v^-)
+2(\mathsf J_\star^+-\mathsf J_\star^-),
\]
as required.
\end{proof}

We
next translate the preceding proposition into the change in $\Phi_0$ produced by
conditioning on $X_\star$.

\begin{proof}[Proof of \cref{prop:girth-estimate}]
By \cref{eq:Phi0-def} and \cref{eq:Jv-information},
\[
g\left(
  \Phi_0(\bs{p})
  -\E_{X_\star}\Phi_0(\bs{p}^{X_\star})
\right)
=
\sum_{v\in V}\mathsf J_v^-
+\frac{g-2}{2}
 \sum_{v\in V}(\mathsf J_v^+-\mathsf J_v^-).
\]

At the distinguished vertex $\star \in L$, $\bs{p}_\star$ is the law of
$X_\star$, while $\bs{p}_\star^{X_\star}$ is a point mass. Hence
\[
\mathsf J_\star^+=H(X_\star),
\qquad
\mathsf J_\star^+-\mathsf J_\star^-
=
S(\bs{p}_\star).
\]
Substituting these identities into \cref{prop:girth-numerical} gives
\[
gH(X_\star)
\leq
g\left(
  \Phi_0(\bs{p})
  -\E_{X_\star}\Phi_0(\bs{p}^{X_\star})
\right)
+2S(\bs{p}_\star).
\]
Dividing by $g$ and rearranging proves
\[
\Phi_0(\bs{p})
-\E_{X_\star}\Phi_0(\bs{p}^{X_\star})
-H(X_\star)
\geq
-\frac2gS(\bs{p}_\star).
\]
Since $\star\in L$ is arbitrary, this proves
\cref{prop:girth-estimate}.
\end{proof}
\section{The permutation estimate}
\label{sec:permutation}

We now prove \cref{prop:averaging-estimate}. Until \cref{sec:matching-permutation}, the graph plays no role and the argument applies to an
arbitrary random permutation. 

Recall from \cref{eq:Phi-decomposition} that
\[
  \Gamma(\bs{q})
  =H(\bs{q})-2H_{\mr c}(\bs{q})+2F(\bs{q}).
\]

\subsection{Deletion and concavity}

We first establish a recursion for $\Gamma$. We use it both
to prove the concavity of $\Gamma$ and to establish
\cref{lem:avg-over-cond-coords}.

For $\bs{q}\in\Delta_d$ and $a\in[d]$ with $q_a<1$, let
\[
  \bs{q}^{\setminus a}
  :=
  \left(\frac{q_i}{1-q_a}\right)_{i\ne a}
  \in\Delta_{d-1}.
\]
When $q_a=1$, an expression of the form
\[
(1-q_a)\Gamma(\bs{q}^{\setminus a})
\]
is interpreted as zero. Thus, the right-hand side of
\cref{eq:delete-one} is well defined also when $\bs{q}$ is a point
mass.

\begin{lemma}
\label{lem:delete-one-identity}
For every $\bs{q}\in\Delta_d$,
\begin{equation}\label{eq:delete-one}
  d\Gamma(\bs{q})
  =
  S(\bs{q})
  +\sum_{a=1}^d(1-q_a)\Gamma(\bs{q}^{\setminus a}).
\end{equation}
\end{lemma}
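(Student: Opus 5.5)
The plan is to verify \cref{eq:delete-one} by a direct computation, after rewriting $\Gamma$ in a form where both sides split into an entropy part and a suffix-mass part. By \cref{def:s-i-pi}, $F(\bs q)=G(\bs q)+H_{\mr c}(\bs q)$, where
\[
G(\bs q):=\E_\pi\Bigl[\sum_{t=1}^d q_{\pi(t)}\log s_t^\pi(\bs q)\Bigr].
\]
Hence $\Gamma=H+2G$. Since $S=H-H_{\mr c}$, the identity \cref{eq:delete-one} is equivalent to the sum of two separate identities:
\[
\sum_{a=1}^d(1-q_a)H(\bs q^{\setminus a})=(d-1)H(\bs q)-H_{\mr c}(\bs q),
\qquad
dG(\bs q)=-H_{\mr c}(\bs q)+\sum_{a=1}^d(1-q_a)G(\bs q^{\setminus a}).
\]
Indeed, adding the first identity to twice the second gives $d(H+2G)=S+\sum_a(1-q_a)(H+2G)(\bs q^{\setminus a})$.

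For the entropy identity, fix $a$ with $q_a<1$ and expand the entropy of the rescaled vector:
\[
(1-q_a)H(\bs q^{\setminus a})=-\sum_{i\ne a}q_i\log q_i+(1-q_a)\log(1-q_a)=H(\bs q)+q_a\log q_a+(1-q_a)\log(1-q_a).
\]
Summing over $a$ gives $dH(\bs q)-H(\bs q)-H_{\mr c}(\bs q)$, which is the first identity.

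For the suffix-mass identity, I will condition on the first element $\pi(1)=a$, which has probability $1/d$.
\begin{itemize}
\item The $t=1$ term vanishes, because $s_1^\pi(\bs q)=1$.
\item Given $\pi(1)=a$, the remaining order $\pi(2),\ldots,\pi(d)$ is a uniform ordering of $[d]\setminus\{a\}$.
\item For $t\geq2$, the suffix masses of $\bs q$ equal $(1-q_a)$ times the corresponding suffix masses of $\bs q^{\setminus a}$.
\end{itemize}
Consequently, writing $q_i=(1-q_a)q^{\setminus a}_i$ and splitting $\log s_t^\pi=\log(1-q_a)+\log s'_t$, the conditional expectation of the sum becomes
\[
(1-q_a)G(\bs q^{\setminus a})+(1-q_a)\log(1-q_a).
\]
Averaging over $a$ gives $dG(\bs q)=\sum_a(1-q_a)G(\bs q^{\setminus a})-H_{\mr c}(\bs q)$, which is the second identity.

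The only delicate point is the degenerate case $q_a=1$, where $\bs q^{\setminus a}$ is undefined. In that case every other coordinate is zero. The conventions ($0\log0=0$, and $(1-q_a)\Gamma(\bs q^{\setminus a})$ read as $0$) make the $a$-th terms above equal to zero on both sides, so each identity still holds. Alternatively, this case follows from \cref{rem:zero-coordinates}, since both sides of \cref{eq:delete-one} vanish at a point mass. I expect no real obstacle beyond careful bookkeeping of these conventions and the rescaling of suffix masses.
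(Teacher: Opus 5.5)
Your proposal is correct and follows the paper's route: condition on the first element $\pi(1)$ of the random ordering to get a recursion for the suffix-mass part, then combine it with the entropy deletion identity $\sum_a(1-q_a)H(\bs q^{\setminus a})=(d-1)H(\bs q)-H_{\mr c}(\bs q)$. The differences are cosmetic. You peel off $G=R-H$ rather than the paper's $R=F+S$. You also verify the entropy identity by direct expansion, whereas the paper uses a chain-rule argument with an auxiliary uniform index $A$ and the indicator $B=\mbf{1}_{\{J\ne A\}}$.
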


\begin{proof}
If $\bs{q}$ is a point mass, then both sides of
\cref{eq:delete-one} vanish. We may therefore assume that $q_a<1$
for every $a$. Let
\[
  R(\bs{q})
  :=F(\bs{q})+S(\bs{q})
  =
  \E_\pi\left[
    \sum_{t=1}^d q_{\pi(t)}
    \log\frac{s_t^\pi(\bs{q})}{q_{\pi(t)}}
  \right],
\]
so that $\Gamma=2R-H$. Condition on the first element
$a=\pi(1)$ of the random ordering. This element contributes
$q_a\log(1/q_a)$ to $R(\bs{q})$. After deleting $a$ and renormalizing,
the remaining contribution is
$(1-q_a)R(\bs{q}^{\setminus a})$. Averaging over the $d$ possible
first elements gives
\begin{equation}\label{eq:R-recursion}
  dR(\bs{q})
  =
  H(\bs{q})
  +\sum_{a=1}^d(1-q_a)R(\bs{q}^{\setminus a}).
\end{equation}

Note that ordinary entropy satisfies the analogous identity
\begin{equation}\label{eq:H-delete}
  \sum_{a=1}^d(1-q_a)H(\bs{q}^{\setminus a})
  =
  (d-1)H(\bs{q})-H_{\mr c}(\bs{q}).
\end{equation}
Indeed, let $J\sim\bs{q}$, choose $A$ uniformly from $[d]$
independently of $J$, and let $B=\mbf{1}_{\{J\ne A\}}$. Conditioning
first on $(A,B)$ gives
\[
  H(J\mid A,B)
  =
  \frac1d\sum_{a=1}^d
  (1-q_a)H(\bs{q}^{\setminus a}).
\]
On the other hand, since $B$ is determined by $(J,A)$,
\[
  H(J\mid A,B)
  =
  H(J\mid A)-H(B\mid A)
  =
  H(\bs{q})-\frac1d\sum_{a=1}^d h(q_a).
\]
Equating these expressions and using
\[
  \sum_{a=1}^d h(q_a)
  =
  H(\bs{q})+H_{\mr c}(\bs{q})
\]
proves \cref{eq:H-delete}. Finally, substituting $\Gamma=2R-H$ into
\cref{eq:R-recursion} and applying \cref{eq:H-delete} gives \cref{eq:delete-one}.
\end{proof}

The deletion identity has the following information-theoretic
consequence. Let $J\sim\bs{q}\in\Delta_d$, where $d\geq2$, and, conditioned on
$J$, let $W$ be uniformly distributed on $[d]\setminus\{J\}$. Then
\[
  \P(W=a)=\frac{1-q_a}{d-1},
  \qquad
  \mc{L}(J\mid W=a)=\bs{q}^{\setminus a}
\]
whenever $q_a<1$. By the definition of $\Gamma$-information gain,
\begin{align*}
  \mathsf I_\Gamma(J;W)
  &=
  \Gamma\bigl(\mc{L}(J)\bigr)
  -\E_W\Gamma\bigl(\mc{L}(J\mid W)\bigr)\\
  &=
  \Gamma(\bs{q})
  -\sum_{a=1}^d
    \P(W=a)\,
    \Gamma\bigl(\mc{L}(J\mid W=a)\bigr)\\
  &=
  \Gamma(\bs{q})
  -\frac1{d-1}\sum_{a=1}^d
    (1-q_a)\Gamma(\bs{q}^{\setminus a}),
\end{align*}
where a term with $q_a=1$ is interpreted as zero. Hence
\[
  (d-1)\mathsf I_\Gamma(J;W)
  =
  (d-1)\Gamma(\bs{q})
  -\sum_{a=1}^d
    (1-q_a)\Gamma(\bs{q}^{\setminus a}).
\]
Similarly,
\[
  \mathsf I_\Gamma(J;J)=\Gamma(\bs{q}),
\]
because conditioning on $J$ produces point-mass distributions and
$\Gamma$ vanishes at point masses. Therefore,
\begin{equation}
\label{eq:delete-calibration}
  \mathsf I_\Gamma(J;J)
  +(d-1)\mathsf I_\Gamma(J;W)
  =
  d\Gamma(\bs{q})
  -\sum_{a=1}^d
    (1-q_a)\Gamma(\bs{q}^{\setminus a}) =S(\bs{q}),
\end{equation}
where the final equality is \cref{eq:delete-one}.

\begin{lemma}\label{lem:Gamma-concavity}
For every $d\geq1$, the functional $\Gamma:\Delta_d\to\R$ is
concave.
\end{lemma}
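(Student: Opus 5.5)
Plan: induction on $d$ using the deletion identity \cref{eq:delete-one}. For $d=1$, $\Gamma$ vanishes identically on $\Delta_1$, which is a single point. For $d=2$, with $\bs q=(x,1-x)$, the only suffix orderings give $F(\bs q)=\tfrac12\bigl(x\log 1 + (1-x)\log(1-x)\bigr)+\tfrac12\bigl((1-x)\log 1+x\log x\bigr)+H_{\mr c}(\bs q)=-\tfrac12 h(x)+h(x)=\tfrac12 h(x)$, while $H=H_{\mr c}=h(x)$. Hence $\Gamma(\bs q)=h(x)-2h(x)+h(x)=0$. So $\Gamma\equiv 0$ on $\Delta_2$, which is trivially concave. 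This base case already illustrates that each deleted term $\Gamma(\bs q^{\setminus a})$ with $d-1=1$ vanishes.

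For the inductive step, I will rewrite \cref{eq:delete-one} as
\[
d\,\Gamma(\bs q)=S(\bs q)+\sum_{a=1}^d (1-q_a)\,\Gamma\!\left(\frac{(q_i)_{i\neq a}}{1-q_a}\right).
\]
The key observation is that each summand is the \emph{perspective} of $\Gamma$ on $\Delta_{d-1}$. Precisely, extend $\Gamma_{d-1}$ to the cone $\R_{\geq0}^{d-1}$ by $1$-homogeneity: $\widetilde\Gamma(\bs y):=(\sum_i y_i)\,\Gamma(\bs y/\sum_i y_i)$, with value $0$ at $\bs y=0$. If $\Gamma$ is concave on $\Delta_{d-1}$, then $\widetilde\Gamma$ is concave on the cone, because the perspective of a concave function is concave. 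The $a$-th summand equals $\widetilde\Gamma(P_a\bs q)$, where $P_a$ is the linear map deleting coordinate $a$, and since $\sum_{i\neq a}q_i=1-q_a$ this matches exactly. A concave function composed with a linear map is concave, and $S$ is concave by \cref{lem:b-properties}. Hence $d\,\Gamma$ is a sum of concave functions on $\Delta_d$, and dividing by $d$ completes the induction.

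The main points to check are the boundary conventions. When $q_a=1$, the summand is defined as $0$, which agrees with $\widetilde\Gamma(0)=0$. We also need $\widetilde\Gamma$ to be concave up to the boundary of the cone, including the apex. I would prove this by continuity: $\Gamma$ is continuous and bounded on the compact simplex, so $\widetilde\Gamma(\bs y)\to0$ as $\bs y\to 0$. Concavity on the open cone then extends to the closed cone by taking limits, or alternatively by checking the midpoint inequality directly when one argument is $0$, using homogeneity. Continuity of $F$ holds because it is a finite average of continuous expressions, each of the form $q\log s$ with $s\geq q$. The only real content is recognizing the perspective structure; the remaining steps are routine.
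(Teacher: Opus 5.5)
Your proposal is correct and follows the paper's proof: induction on $d$ via the deletion identity \cref{eq:delete-one}, with each summand recognized as the perspective of $\Gamma_{d-1}$ composed with a linear/affine map, plus concavity of $S$. The separate $d=2$ computation is redundant, since the inductive step from $d=1$ already covers it. Your explicit handling of the $q_a=1$ vertex, through the homogeneous extension with $\widetilde\Gamma(0)=0$, is slightly more careful than the paper, which leaves that boundary case implicit.
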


\begin{proof}
Let $\Gamma_d$ denote the restriction of $\Gamma$ to $\Delta_d$. We
argue by induction on $d$. Since $\Gamma_1=0$, the claim is immediate
for $d=1$. Suppose that $\Gamma_{d-1}$ is concave. For
$\bs{q}\in\Delta_d$, let
 $ \bs{q}_{-a}:=(q_i)_{i\ne a}.
$
By \cref{eq:delete-one},
\[
  \Gamma_d(\bs{q})
  =
  \frac1dS(\bs{q})
  +\frac1d\sum_{a=1}^d
  (1-q_a)\,
  \Gamma_{d-1}\left(\frac{\bs{q}_{-a}}{1-q_a}\right),
\]
where a summand with $q_a=1$ is interpreted as zero. For each $a$,
the map
\[
  \bs{q}
  \longmapsto
  (1-q_a)\,
  \Gamma_{d-1}\left(\frac{\bs{q}_{-a}}{1-q_a}\right)
\]
is the perspective of $\Gamma_{d-1}$, composed with the affine map
$\bs{q}\mapsto(1-q_a,\bs{q}_{-a})$, and is therefore concave; see
\cite[Section~3.2.6]{BoydVandenberghe}. Since $S$ is concave by
\cref{lem:b-properties}, it follows that
$\Gamma_d$ is concave.
\end{proof}

\subsection{An information inequality for random permutations}

We now show that, for any fixed coordinate $X_j$, the sum over $i$ of
the $\Gamma$-information gained about $X_j$ by observing $X_i$ is at
least the local Bethe entropy of the law of $X_j$.
\begin{lemma}
\label{lem:avg-over-cond-coords}
Let $X=(X_1,\ldots,X_d)$ be a random permutation of $[d]$, not
necessarily uniformly distributed. Then, for every $j\in[d]$,
\[
  \sum_{i=1}^d\mathsf I_\Gamma(X_j;X_i)
  \geq S\bigl(\mc{L}(X_j)\bigr).
\]
\end{lemma}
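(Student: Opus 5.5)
We will reduce the statement to the calibration identity \cref{eq:delete-calibration}, using the data-processing inequality of \cref{lem:data-processing} together with the concavity of $\Gamma$ from \cref{lem:Gamma-concavity}. Fix $j\in[d]$ and set $J:=X_j$ and $\bs{q}:=\mc{L}(J)\in\Delta_d$. The term $i=j$ contributes $\mathsf I_\Gamma(X_j;X_j)=\Gamma(\bs{q})=\mathsf I_\Gamma(J;J)$, since conditioning on $X_j$ yields point masses and $\Gamma$ vanishes there. By \cref{eq:delete-calibration}, it therefore suffices to show
\[
  \sum_{i\ne j}\mathsf I_\Gamma(X_j;X_i)\geq(d-1)\mathsf I_\Gamma(J;W),
\]
where, conditionally on $J$, the variable $W$ is uniform on $[d]\setminus\{J\}$. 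If $d=1$ there is nothing to prove.

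The idea is to realize $W$ as a garbling of a single observation $X_I$ with a random index $I$. Let $I$ be uniform on $[d]\setminus\{j\}$, independent of $X$, and set $W':=X_I$. Since $X$ is a bijection, conditionally on $J=X_j$ the value $X_I$ lies in $[d]\setminus\{J\}$. However, $X_I$ need not be uniform there, so we instead work with the pair $Y:=(I,X_I)$. Concavity of $\Gamma$ and conditioning on the independent index $I$ give
\[
  \mathsf I_\Gamma(J;Y)=\frac1{d-1}\sum_{i\ne j}\mathsf I_\Gamma(X_j;X_i).
\]
Indeed, $\mc{L}(J)$ is unaffected by $I$, so $\Gamma(\mc{L}(J))-\E_{I,X_I}\Gamma(\mc{L}(J\mid I,X_I))$ is exactly the average over $i$ of the individual gains.

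The required inequality is therefore $\mathsf I_\Gamma(J;Y)\geq\mathsf I_\Gamma(J;W)$ for a suitable coupling. Apply \cref{lem:data-processing}: we need a Markov chain $J$--$Y$--$W$ in which $W$ has the prescribed conditional law given $J$. This is where the main obstacle lies. The natural choice $W=X_I$ has conditional law given $J$ equal to the law of a uniformly random entry of the remaining values, and the remaining entries $(X_i)_{i\ne j}$ form a bijection onto $[d]\setminus\{J\}$. Since $I$ is a uniform index among $[d]\setminus\{j\}$ and independent of $X$, the value $X_I$ is uniform on $[d]\setminus\{J\}$ given $J$, and in fact given all of $X$. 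So $W:=X_I$ is a deterministic function of $Y$, and $(J,W)$ has exactly the law used in \cref{eq:delete-calibration}. The chain $J$--$Y$--$W$ is trivial, and \cref{lem:data-processing} gives $\mathsf I_\Gamma(J;Y)\geq\mathsf I_\Gamma(J;W)$.

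Combining these, $\sum_i\mathsf I_\Gamma(X_j;X_i)\geq\mathsf I_\Gamma(J;J)+(d-1)\mathsf I_\Gamma(J;W)=S(\bs{q})$. The only delicate points are the identification of the averaged gain with $\mathsf I_\Gamma(J;Y)$, which uses the independence of $I$, and the conventions at coordinates with $q_a=1$, which are handled as in \cref{eq:delete-calibration}.
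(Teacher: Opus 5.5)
Your proof is correct and is essentially the paper's argument: the paper also takes $K$ uniform on $[d]\setminus\{j\}$ independent of $X$, identifies $\frac{1}{d-1}\sum_{i\ne j}\mathsf I_\Gamma(X_j;X_i)$ with $\mathsf I_\Gamma(X_j;(K,X_K))$, observes that $X_K$ is uniform on $[d]\setminus\{X_j\}$ so that \cref{eq:delete-calibration} applies, and concludes by the data-processing inequality with $\Gamma$ concave. Two minor points of exposition. Your early remark that $X_I$ ``need not be uniform'' is wrong, and your later correct observation supersedes it. The pair $(I,X_I)$ is needed only to express the average of the gains, and that identity uses the independence of $I$, not the concavity of $\Gamma$.
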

\begin{proof}
The case $d=1$ is immediate. Suppose that $d\geq2$, fix $j\in[d]$,
and let $\bs{q}:=\mc{L}(X_j)$. Choose $K$ uniformly from
$[d]\setminus\{j\}$, independently of $X$, and set
\[
  T:=(K,X_K),
  \qquad
  W:=X_K.
\]
Thus, $T$ records both the chosen coordinate and its value, while
$W$ records only the value. Since $K$ is independent and uniform,
\[
  \mathsf I_\Gamma(X_j;T)
  =
  \frac1{d-1}
  \sum_{i\ne j}\mathsf I_\Gamma(X_j;X_i).
\]

Conditioned on $X_j$, the values in the remaining coordinates are
exactly the elements of $[d]\setminus\{X_j\}$. Since $K$ is uniform
over these coordinates, $W=X_K$ is uniform over
$[d]\setminus\{X_j\}$. Therefore, \cref{eq:delete-calibration} gives
\[
  \mathsf I_\Gamma(X_j;X_j)
  +(d-1)\mathsf I_\Gamma(X_j;W)
  =
  S(\bs{q}).
\]

On the other hand, $W$ is obtained from $T=(K,X_K)$ by discarding
the index $K$. Thus, $X_j$--$T$--$W$ is a Markov chain. Since $\Gamma$ is
concave by \cref{lem:Gamma-concavity}, the data-processing inequality
in \cref{lem:data-processing} gives
\[
  \mathsf I_\Gamma(X_j;T)
  \geq
  \mathsf I_\Gamma(X_j;W).
\]
It follows that
\begin{align*}
  \sum_{i=1}^d\mathsf I_\Gamma(X_j;X_i)
  &=
  \mathsf I_\Gamma(X_j;X_j)
  +(d-1)\mathsf I_\Gamma(X_j;T)\\
  &\geq
  \mathsf I_\Gamma(X_j;X_j)
  +(d-1)\mathsf I_\Gamma(X_j;W)\\
  &=S(\bs{q}). \qedhere
\end{align*}
\end{proof}

\subsection{Completing the proof}
\label{sec:matching-permutation}

We now deduce \cref{prop:averaging-estimate} from
\cref{lem:avg-over-cond-coords}. Fix identifications of $L$ and $R$
with $[n]$. Under these identifications, the matching bijection
$X:L\to R$ and its inverse are random permutations of $[n]$.

For a vertex $v$, the law of its matched neighbor is $\bs{p}_v$
after adjoining zero coordinates corresponding to nonneighbors.
The same is true of every conditional law obtained by observing a
coordinate of $X$. By \cref{rem:zero-coordinates}, adjoining these
zero coordinates does not affect $S$, $F$, or $\Gamma$, and hence
does not affect the corresponding $\Gamma$-information gains.

For a left vertex $v$, \cref{lem:avg-over-cond-coords} therefore
applies directly to $Z_v=X_v$. For a right vertex $w$, applying the
lemma to $X^{-1}$ controls the $\Gamma$-information about
$(X^{-1})_w$ provided by the inverse coordinates $(X^{-1})_y$. The
proposition instead involves the information provided by the forward
coordinates $X_u$. Although the individual terms need not agree,
their sums do.

\begin{lemma}
\label{lem:forward-inverse}
For every $w\in R$,
\[
  \sum_{u\in L}\mathsf I_\Gamma((X^{-1})_w;X_u)
  =
  \sum_{y\in R}\mathsf I_\Gamma((X^{-1})_w;(X^{-1})_y).
\]
\end{lemma}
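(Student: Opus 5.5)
The identity should follow by rewriting both sides as sums over the same family of events, indexed by pairs $(u,y)\in L\times R$. The key observation is that for every $u\in L$ and $y\in R$ the events $\{X_u=y\}$ and $\{(X^{-1})_y=u\}$ coincide, since both say that $uy\in M$. No property of $\Gamma$ (not even concavity) should be needed; only the definition of $\Gamma$-information gain in \cref{def:Psi-information} is used.

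Fix $w\in R$ and write $U:=(X^{-1})_w$. First I expand each term on the left using \cref{eq:Psi-information}, writing the expectation over $X_u$ as a sum over its values:
\[
  \mathsf I_\Gamma(U;X_u)
  =\Gamma(\mc L(U))
   -\sum_{\substack{y\in R\\ \P(X_u=y)>0}}
    \P(X_u=y)\,\Gamma\bigl(\mc L(U\mid X_u=y)\bigr).
\]
Summing over $u\in L$ gives
\[
  n\,\Gamma(\mc L(U))-\sum_{(u,y)}\P(X_u=y)\,\Gamma\bigl(\mc L(U\mid X_u=y)\bigr),
\]
where the sum runs over pairs $(u,y)\in L\times R$ with $\P(X_u=y)>0$.

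Symmetrically, I expand the right-hand side by summing over $y\in R$ and over the values $u\in L$ of $(X^{-1})_y$. This gives
\[
  n\,\Gamma(\mc L(U))-\sum_{(u,y)}\P\bigl((X^{-1})_y=u\bigr)\,\Gamma\bigl(\mc L(U\mid (X^{-1})_y=u)\bigr),
\]
where now the sum runs over pairs with $\P((X^{-1})_y=u)>0$. Here I use $|L|=|R|=n$, so the constant terms agree.

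Because the events $\{X_u=y\}$ and $\{(X^{-1})_y=u\}$ coincide, the index sets of the two double sums are identical. For each pair, the probabilities are equal and the conditional laws of $U$ are equal, so the double sums agree term by term, which proves the lemma.

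The only point requiring care is the bookkeeping of zero-probability values. Pairs with $\P(X_u=y)=0$ are omitted from both expansions, consistently with the conventions already in force. The law of $U$, and its conditional laws, are taken on $L$ with zero coordinates for nonneighbors, which by \cref{rem:zero-coordinates} does not affect $\Gamma$. I do not expect a genuine obstacle here. The main thing to state clearly is why individual terms may differ while the sums agree: a single $X_u$ reveals the whole row of events $\{X_u=y\}_{y}$, whereas a single $(X^{-1})_y$ reveals a column, and only after summing over all rows or all columns do both sides exhaust the same collection of events.
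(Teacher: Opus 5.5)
Your proposal is correct and matches the paper's proof. Both arguments expand each side via the definition of $\Gamma$-information gain into $n\Gamma(\mc{L}((X^{-1})_w))$ minus a double sum over pairs $(u,y)\in L\times R$, then note that $\{X_u=y\}=\{(X^{-1})_y=u\}$, so the two double sums agree term by term (your explicit treatment of zero-probability pairs is harmless extra care that the paper leaves implicit).
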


\begin{proof}
Fix $w\in R$ and write $Z:=(X^{-1})_w$. Expanding the definition of
$\Gamma$-information gain gives
\begin{align*}
  \sum_{u\in L}\mathsf I_\Gamma(Z;X_u)
  &=
  n\Gamma\bigl(\mc{L}(Z)\bigr)
  -\sum_{\substack{u\in L\\y\in R}}
    \P(X_u=y)\,
    \Gamma\bigl(\mc{L}(Z\mid X_u=y)\bigr),\\
  \sum_{y\in R}\mathsf I_\Gamma(Z;(X^{-1})_y)
  &=
  n\Gamma\bigl(\mc{L}(Z)\bigr)
  -\sum_{\substack{y\in R\\u\in L}}
    \P((X^{-1})_y=u)\,
    \Gamma\bigl(\mc{L}(Z\mid (X^{-1})_y=u)\bigr).
\end{align*}
The two double sums are equal because
\[
  \{X_u=y\}=\{(X^{-1})_y=u\}. \qedhere
\]
\end{proof}

\begin{proof}[Proof of \cref{prop:averaging-estimate}]
For each $v\in L$, we have $Z_v=X_v$, so
\cref{lem:avg-over-cond-coords} gives
\[
  \sum_{u\in L}\mathsf I_\Gamma(Z_v;X_u)
  \geq
  S\bigl(\mc{L}(X_v)\bigr)
  =
  S(\bs{p}_v).
\]
For each $w\in R$, we have $Z_w=(X^{-1})_w$. Applying
\cref{lem:avg-over-cond-coords} to $X^{-1}$ and using
\cref{lem:forward-inverse}, we obtain
\[
  \sum_{u\in L}\mathsf I_\Gamma(Z_w;X_u)
  =
  \sum_{y\in R}\mathsf I_\Gamma((X^{-1})_w;(X^{-1})_y)
  \geq
  S\bigl(\mc{L}((X^{-1})_w)\bigr)
  =
  S(\bs{p}_w).
\]
Summing over the two vertex classes yields
\begin{align*}
  \sum_{u\in L}\sum_{v\in V}
  \mathsf I_\Gamma(Z_v;X_u)
  &\geq
  \sum_{v\in V}S(\bs{p}_v)
  =2H_{\mr B}(\bs{p})
  =2\sum_{u\in L}S(\bs{p}_u),
\end{align*}
where \cref{eq:bethe-entropy-local} gives
\[
  \sum_{v\in L}S(\bs{p}_v)
  =
  \sum_{v\in R}S(\bs{p}_v)
  =
  H_{\mr B}(\bs{p}).
\]
\end{proof}

\bibliographystyle{amsplain0}
\bibliography{main}

\end{document}